
\documentclass[numbers,webpdf,imanum]{ima-authoring-template}%

\graphicspath{{Fig/}}


\theoremstyle{thmstyletwo}%
\newtheorem{theorem}{Theorem}
\newtheorem{lemma}{Lemma}

\theoremstyle{thmstylethree}%

\newtheorem{assumptions}{Assumptions}

\newtheorem{example}{Example}%
\newtheorem{remark}{Remark}%

\numberwithin{equation}{section}

\usepackage{bm}


\def\dd{\hspace*{1pt}\mathrm{d}}
\def\ee{\mathrm e}

\def\Co_#1{C}
\def\Co_#1{C_{#1}}

\def\NN{\mathbb N}

\def\RR{\mathbb R}
\def\CC{\mathbb C}

\def\Ell{\mathrm{L}}

\def\Lip{\mathrm{Lip}}
\def\Ell{\mathrm{L}}
\def\He{\mathrm{H}}

\def\sup{\operatornamewithlimits{sup}}

\def\max{\operatornamewithlimits{max}}

\newcommand{\downto}{\downarrow}

\def\tmax{t_{\mathrm{max}}}

\newcommand{\TT}{\mathbb{T}}
\def\st{:}

\def\LIP{\ell_{\Sigma}}
\def\LIPB{\ell_{\Sigma,B}}

\newcommand{\calT}{\mathcal{T}}
\newcommand{\calA}{\mathcal{A}}

\newcommand{\calO}{\mathcal{O}}
\newcommand{\calR}{\mathcal{R}}

\newcommand{\calF}{\mathcal{F}}

\newcommand{\calTT}{\bm{T}}
 
\newcommand{\bfL}{\bm{L}}

\newcommand{\vecu}{\bm u}
\newcommand{\vecv}{\bm v}

\newcommand{\dom}{\mathrm{dom}}

\renewcommand\max{\operatornamewithlimits{max}}

\newcommand{\vect}[2]{\tbinom{#1}{#2}}

\newcommand{\eloc}{\ee^{\textrm{loc}}}
\def\str{\Sigma}

\makeatletter
\newenvironment{iiv}{\begin{enumerate}[9999]}{\end{enumerate}}

\newenvironment{abc}{\begin{enumerate}[9]}{\end{enumerate}}

\newenvironment{numer}{\begin{enumerate}[99]}{\end{enumerate}}
\newenvironment{numerA}{\begin{enumerate}[99]}{\end{enumerate}}

\makeatother
\newenvironment{enumsteps}{%
	\begin{enumerate}[{\it Step 1.}]}{%
	\end{enumerate}
}  

\newcommand{\bfA}{\mathbf{A}}
\newcommand{\bfM}{\mathbf{M}}
\newcommand{\bfF}{\bm{\calF}}
\newcommand{\bfrho}{\bm{\varrho}}
\newcommand{\bfu}{\mathbf{u}}
\newcommand{\bfv}{\mathbf{v}}

\newcommand{\ga}{\gamma}
\renewcommand{\Omega}{\varOmega}
\renewcommand{\Gamma}{\varGamma}
\newcommand{\Ga}{\Gamma}
\newcommand{\Om}{\Omega}
\newcommand{\pa}{\partial}
\newcommand{\R}{\RR}

\newcommand{\nnu}{\textnormal{n}}


\tabcolsep0pt

\definecolor{grey}{rgb}{0.42,0.42,0.42}
\def\bafahide#1{}
\def\pecshide#1{}

\definecolor{anormalgreen}{rgb}{0,0.72,0}

\newcommand{\blueon}{\color{black}}
\newcommand{\blueoff}{\color{black}}


\begin{document}

\DOI{DOI HERE}
\copyrightyear{2022}
\vol{00}
\pubyear{2022}
\access{Advance Access Publication Date: Day Month Year}
\appnotes{Paper}
\copyrightstatement{Published by Oxford University Press on behalf of the Institute of Mathematics and its Applications. All rights reserved.}
\firstpage{1}


\title[Splitting for semilinear boundary coupled systems]{Error estimates for a splitting integrator for abstract semilinear boundary coupled systems}

\author{Petra Csom\'os
\address{\orgname{Alfr\'ed R\'enyi Institute of Mathematics}, \orgaddress{\street{Re\'altanoda utca 13--15.}, \postcode{1053}, \state{Budapest}, \country{Hungary}}}
\address{\orgdiv{Department of Applied Analysis and Computational Mathematics, and MTA-ELTE Numerical Analysis and Large Networks Research Group}, \orgname{E\"otv\"os Lor\'and University}, \orgaddress{\street{P\'azm\'any P\'eter s\'et\'any 1/C}, \postcode{1117}, \state{Budapest}, \country{Hungary}}}}
\author{B\'alint Farkas
\address{\orgdiv{School of Mathematics and Natural Sciences}, \orgname{University of Wuppertal}, \orgaddress{\street{Gaußstraße 20}, \postcode{42119}, \state{Wuppertal}, \country{Germany}}}}
\author{Bal\'azs Kov\'acs*
\address{\orgdiv{Department of Mathematics}, \orgname{Technical University of Munich}, \postcode{85748}, \state{Garching bei M\"unchen}, \country{Germany}}
\address{\orgdiv{Faculty of Mathematics}, \orgname{University of Regensburg}, \orgaddress{\street{Universit\"atsstr. 31}, \postcode{93040}, \state{Regensburg}, \country{Germany}}}}

\authormark{P. Csom\'os, B. Farkas, and B. Kov\'acs}

\corresp[*]{Corresponding author: \href{email:balazs.kovacs@mathematik.uni-regensburg.de}{balazs.kovacs@mathematik.uni-regensburg.de}}

\received{26}{04}{2022}
\revised{Date}{0}{Year}
\accepted{Date}{0}{Year}


\abstract{We derive a numerical method, based on operator splitting, to abstract parabolic semilinear boundary coupled systems. The method decouples the linear components which describe the coupling and the dynamics in the abstract bulk- and surface-spaces, and treats the nonlinear terms similarly to an exponential integrator. 	The convergence proof is based on estimates for a recursive formulation of the error, using the parabolic smoothing property of analytic semigroups and a careful comparison of the exact and approximate flows.
This analysis also requires a deep understanding of the effects of the Dirichlet operator (the abstract version of the harmonic extension operator) which is essential for the stable coupling in our method. 
Numerical experiments, including problems with dynamic boundary conditions, reporting on convergence rates are presented.}
\keywords{Lie splitting, error estimates, boundary coupling, semilinear problems.}

\maketitle

\section{Introduction}
In this paper we derive a Lie-type splitting integrator for abstract \emph{semilinear} boundary coupled systems, and prove first order error estimates for the time integrator by extending the results of \cite{CsEF} from the linear case. The main idea of our algorithm is to decouple the two nonlinear problems appearing in the original coupled system, while maintaining stability of the boundary coupling with the help of the abstract Dirichlet operator. 
We use techniques from operator semigroup theory to prove the first-order convergence in the following abstract setting.

\medskip
We consider the abstract semilinear boundary coupled systems of the form:
\begin{equation}\label{eq:main0}
\begin{cases}
\begin{aligned}
\dot u(t)&=A_m u(t) + \calF_1(u(t),v(t))&&\text{ for } 0<t\leq t_{\max},\quad u(0)=u_0\in E,\\
\dot v(t)&=B v(t) + \calF_2(u(t),v(t))&&\text{ for } 0<t\leq t_{\max},\quad v(0)=v_0\in F, \\
Lu(t)&=v(t) &&\text{ for } 0\leq t \leq t_{\max}, &
\end{aligned}
\end{cases}
\end{equation}
where $A_m$, $B$ are linear operators on the Banach spaces $E$ and $F$, respectively, $\calF_1$, $\calF_2$ are suitable functions, and the two unknown functions $u$ and $v$ are related via the linear coupling operator $L$ acting between (subspaces of) $E$ and $F$. The subscript $m$ in $A_m$ refers to the fact that this operator is maximal in a certain sense, this and the bulk--surface terminology is further explained in Example~\ref{examp:Lip}. A typical setting would be that $L\colon E \to F$ is a \emph{trace-type operator} between the  space $E$ (for the bulk dynamics) and the \emph{boundary} space $F$ (for the surface dynamics). The precise setting and assumptions for \eqref{eq:main0} will be described below. \medskip

This abstract framework simultaneously includes problems which have been analysed on their own as well.
For instance, abstract \emph{boundary feedback} systems, see   \cite{DeschLasieckaSchappacher}, \cite{DeschMilotaSchappacher}, \cite{CENN} and the references therein,  fit into the above abstract framework where the equations in $E$ and $F$ representing the bulk and boundary equations. Such examples arise, for instance, for the boundary control of partial differential equation systems, see \cite{LasieckaTriggianiI,LasieckaTriggianiII}, and \cite{KumpfNickel}, \cite[Section~3]{Engel_etal_2010}, and \cite[Section~3]{Adler_etal_2017}. These problems usually involve a bounded feedback operator acting on $u$, which can be easily incorporated into the nonlinear term $\calF_2$ above. For regularity results we refer to these works.
We further note, that semilinear parabolic equations with \emph{dynamic boundary conditions}, see \cite{Wen59,engel2005analyticity,GalG08,ColF,vazquez2011heat,Lie13,Racke2003cahn,Gal07,dynbc}, etc., and diffusion processes on \emph{networks} with boundary conditions satisfying ordinary differential equations in the vertices, see \cite{Mugnolo,MugnoloDiss,Sikolya_flows,MugnoloRomanelli,Mugnolo_book}, etc., both formally fit into this setting. In both cases, however, the feedback operator is unbounded.

\medskip

In this paper we propose, as a first step into this direction, 
a Lie splitting scheme for abstract semilinear boundary coupled systems, where the semilinear term $\calF=(\calF_1,\calF_2)$ is locally Lipschitz (and might include feedback). 
An important feature of our splitting method is that it \emph{separates the flows on $E$ and $F$}, i.e.~separates the bulk and surface dynamics. This could prove to be a considerable computational advantage if the bulk and surface dynamics are fundamentally different (e.g.~fast and slow reactions, linear--nonlinear coupling, etc.).
In general, splitting methods simplify (or even make possible) the numerical treatment of complex systems. If the operator on the right-hand side of the initial value problem can be written as a sum of at least two suboperators, the numerical solution is obtained from a sequence of simpler subproblems corresponding to the suboperators. We will use the Lie splitting, introduced in \cite{BagGod57}, which, from the functional analytic viewpoint, corresponds to the Lie--Trotter product formula, see \cite{Trotter}, \cite[Corollary~III.5.8]{EngelNagel}. Splitting methods have been widely used in practice and analysed in the literature, see for instance the survey article \cite{McLachlanQuispel_acta}, and see also, e.g., \cite{Strang68,JahnkeLubich,Thalhammer2008,HanO09} and \cite{Faou15,EO_1,EO_2,EO_3,Alonso18,Alonso19,Alonso21}, etc. The latter group lists references which overcome order reduction of splitting schemes occurring for problems different form the boundary coupled system \eqref{eq:main0}. 

In particular, for semilinear partial differential equations (PDEs) with dynamic boundary conditions, two bulk--surface splitting methods were proposed in \cite{dynbc}. The numerical experiments of Section~6.3 therein illustrate that both of the proposed splitting schemes suffer from order reduction.

Recently, in \cite{dynbc_PDAE_splitting}, a first-order convergent bulk--surface Lie splitting scheme was proposed and analysed. In \cite{dynbc_PDAE_splitting} parabolic PDEs with dynamic boundary conditions were considered, proving that the splitting based fully discrete method converges to the spatially semi-discrete finite element solution. For results on semilinear problems in particular see \cite[Appendix~A]{dynbc_PDAE_splitting}. 
The following aspects separate the two approaches:
Even though both methods are suitable for semilinear problems, here we consider abstract boundary coupled parabolic problems in a general operator theoretic setting, see \eqref{eq:main0} and Section~\ref{sec:num}, while \cite{dynbc_PDAE_splitting} considers a particular family of PDEs on Lipschitz domains, see \cite[Section~2]{dynbc_PDAE_splitting}.
The two Lie splittings are derived via substantially different theoretical approaches: 
The one herein is derived using semigroup theory and exponential integrators in Banach spaces, see Section~\ref{sec:num}, in contrast the method in \cite{dynbc_PDAE_splitting} is derived based on the differential algebraic structure of the bulk--surface finite element semi-discretisation of a PDE (the resulting sub-flows are discretised using the implicit Euler method), see \cite[Section~4]{dynbc_PDAE_splitting}. 
Here, we assume local Lipschitz continuity of the nonlinearities, in \cite[Theorem~A.1]{dynbc_PDAE_splitting} global Lipschitz continuity and a mild CFL condition were assumed. 
Regularity requirements are comparable, for more details see Remark~\ref{remark:finalafterproof}.
We note that both methods differ from the two splitting schemes proposed in \cite{dynbc}. We are not aware of any other splitting-type numerical approaches separating the ``bulk'' and ``boundary'' dynamics.

In the present work we start by the variation of constants formula and apply the Lie splitting to approximate the appearing linear operator semigroups. More precisely, we will identify three linear suboperators: two describing the dynamics in the bulk and on the surface, respectively, and one corresponding to the coupling. Then, either the solutions to the linear subproblems are known explicitly, or can be efficiently obtained numerically. We will show that the proposed method is first-order convergent for boundary coupled semilinear problems, it does not suffer from order reduction, and is therefore promising for PDEs with dynamic boundary conditions, cf.~\cite{dynbc}, see the experiment in Section~\ref{section:numerics dynbc}.  Due to the \emph{unbounded} boundary feedback operator, our present analysis does not apply to this situation, however, our numerical experiments show first order convergence. We strongly believe that the developed techniques presented in this work provide further insight into the behaviour of operator splitting schemes of such problems. This is strengthened by our numerical experiments.

The convergence result is based on studying stability and consistency, using the procedure called Lady Windermere's fan from \cite[Section~II.3]{HairerWannerI}, however, these two issues cannot be separated as in most convergence proofs, since this would lead to sub-optimal error estimates. Instead, the error is rewritten using recursion formula which, using the parabolic smoothing property (see, e.g., \cite[Theorem~II.4.6 (c)]{EngelNagel}), leads to an induction process to ensure that the numerical solution stays within a strip around the exact solution. 
A particular difficulty lies in the fact that the numerical method for the linear subproblems needs to approximate a convolution term in the exact flow \cite{CsEF}, therefore the stability  of these approximations cannot be merely established based on semigroup properties. Estimates from \cite{CsEF} together with new technical results yield an abstract first-order error estimate for semilinear problems (with a logarithmic factor in the time step), under suitable (local Lipschitz-type) conditions on the nonlinearities. 
%
By this analysis within the abstract setting we gain a deep operator theoretical understanding of these methods, which are applicable for all specific models (e.g.~mentioned above) fitting into the framework of \eqref{eq:main0}.
Numerical experiments illustrate the proved error estimates, and an experiment for dynamic boundary conditions complement our theoretical results. 

\medskip
The paper is organised as follows.
In Section \ref{sec:num} we introduce the used functional analytic framework, and derive the proposed numerical method. We also state our main result, namely, the first-order convergence, the proof of which along with error estimates takes up Sections \ref{section:prep} and \ref{sec:proof}.
Finally, Section~\ref{section:numerics} presents numerical experiments illustrating and complementing our theoretical results.

\section{Setting and the numerical method}\label{sec:num}

We consider two Banach spaces $E$ and $F$, sometimes referred to as the bulk and boundary space, respectively, over the complex field $\CC$. The product space $E \times F$ is endowed with the sum norm, or any other equivalent norm, rendering it a Banach space and the coordinate projections bounded. Elements in the product space will be denoted by boldface letters, e.g.~$\vecu = (u,v)$ for $u\in E$ and $v\in F$. 
The norm on $E \times F$ is simply denoted by $\|\cdot\| = \|\cdot\|_E + \|\cdot\|_F$, on the other hand, for brevity, we will drop the subscript from the $E$ and $F$ norms, as it will be always clear from the context which norm is meant. We employ the usual convention for the operator norm as well.
Then we treat the nonlinearities, derive the numerical method, and present the main result of the paper.

\subsection*{General framework}

We will now define the abstract setting for \emph{linear} boundary coupled systems, established in \cite{CENN}, i.e.~for \eqref{eq:main0} with $\calF_1=0$ and $\calF_2=0$. We will also list all our assumptions on the linear operators in \eqref{eq:main0}.

The following general conditions---collected using Roman numerals---will be assumed throughout the paper: 
\begin{iiv}
	\item The (so-called maximal) operator $A_m\colon\dom(A_m)\subseteq E\to E$ is linear.
	\item The linear operator $L\colon\dom(A_m)\to F$ is surjective and bounded with respect to the graph norm of $A_m$ on $\dom(A_m)$.
	\item The restriction $A_0$ of $A_m$ to $\ker(L)$ generates a strongly continuous semigroup $T_0$ on $E$. The (linear) operator $A_0$ is invertible, i.e., $0\in\rho(A_0)$ the resolvent set of $A_0$, (cf.~\cite[Lemma~2.2]{CENN}).
	\item The  (linear) operator $B$ generates a strongly continuous semigroup $S$ on $F$.
	\item The operator matrix $\vect{A_m}{L}\colon\dom(A_m)\to E\times F$ is closed.
\end{iiv}

We recall from \cite[Lemma 2.2]{CENN} that $L|_{\ker(A_m)}$ is invertible, and its inverse, often called the \emph{Dirichlet operator}, given by
\begin{align}\label{eq:D}
D_0 &:= L|_{\ker(A_m)}^{-1}\colon F\to\ker(A_m)\subseteq E ,
\end{align}
is bounded, and (via $\ker(L) \subset \dom(A_m)$) that 
\[
\dom(A_m)=\dom(A_0)\oplus \ker(A_m) = \ker(L) \oplus \ker(A_m) .
\]
Note that the range of the Dirichlet operator $D_0$ is almost disjoint from the domain of $A_0$ (they intersect in $\{0\}$). Therefore no matter how regular the initial conditions are, the action of $D_0$ serves as a barrier, and will result in the logarithmic loss in the error estimate for exact solutions of arbitrary smoothness.

Let us briefly recall the following example from \cite{CsEF} (see  Examples 2.7 and 2.8 therein), which is also one of the main motivating examples of \cite{CENN}; we refer also to \cite{GeMit11, GeMit08, BeGeMi20} for facts concerning Lipschitz domains.

\begin{example}[Bounded Lipschitz domains]\label{examp:Lip}
		Let $\Omega\subseteq \RR^d$ be a bounded domain with Lipschitz boundary $\Gamma$, $E:=\Ell^2(\Omega)$ and $F:=\Ell^2(\Gamma)$.
	\begin{abc} 
		\item Consider the following operators: $A_m=\Delta_\Omega$ with domain 
		$\dom(A_m):=\{f\st f\in \He^{1/2}(\Omega)\text{ with }\Delta_\Omega f\in \Ell^2(\Omega)\}$ (being the maximal, distributional Laplacian $A_m=\Delta_\Omega$ without boundary conditions), and $Lf=f|_{\Gamma}$ the Dirichlet trace of $f\in \dom(A_m)$ on $\Gamma$ (see, e.g., \cite[pp.{} 89--106]{McL00}).
		Then $L$ is surjective and actually has a bounded right-inverse $D_0$, which is   the harmonic extension operator, i.e.~for any $v \in \Ell^2(\Gamma)$ the function $u = D_0 v$ solves (uniquely) the  Poisson problem $\Delta_\Omega u = 0$ with inhomogeneous Dirichlet boundary condition $L u = v$. The operator $A_0$ is strictly negative, self-adjoint and generates the Dirichlet-heat semigroup $T_0$ on $E$.
		
		\item One can also consider the Laplace--Beltrami operator $B:=\Delta_{\Gamma}$ on $\Ell^2(\Gamma)$, which (with an appropriate domain) is also a strictly negative, self-adjoint operator, see \cite[Theorem 2.5]{GeMit11} or \cite{GeMiMiMi} for details.
	\end{abc}
	In summary, we see that the abstract framework of \cite{CENN}, hence of this paper, covers interesting cases of boundary coupled problems on bounded Lipschitz domains.
\end{example} 

We now turn our attention towards the semigroup, and its generator, corresponding to the linear problem. 
Consider the linear operator 
\begin{equation}\label{eq:calA}
\calA:=\begin{pmatrix}A_m&0\\0& B\end{pmatrix}
\quad \text{with} \quad 
\dom(\calA):=\Big\{\vect{x}{y}\in\dom(A_m)\times\dom(B)\st Lx=y\Big\}.
\end{equation}
For $y\in \dom(B)$ and $t\geq 0$ define the convolution
\begin{equation}\label{eq:Q0tdef}
Q_0(t)y:=-\int_0^t T_0(t-s)D_0S(s)By\dd s.
\end{equation}
For all $y\in\dom(B)$ we also define $Q(t)y$,  and using integration by parts, see \cite{CENN}, we immediately write
\begin{equation}\label{eq:Qtdef}
Q(t)y:=-A_0\int_0^t T_0(t-s)D_0S(s)y\dd s=Q_0(t)y+D_0S(t)y-T_0(t)D_0y.
\end{equation}
We see that $Q_0(t)\colon\dom(B)\to E$ and $Q(t)\colon\dom(B)\to E$ are both linear operators on $\dom(B)$ (and we note that they are bounded when $\dom(B)$ is endowed with the graph norm $\|\cdot\|_B := \|\cdot\| + \|B \cdot \|$).

The next result, recalled from \cite{CENN}, characterizes the generator property of $\calA$, which in turn is in relation with the well-posedness of \eqref{eq:main0}, see Section~1.1 in \cite{MugnoloDiss}.
\begin{theorem}[{\cite[Theorem 2.7]{CENN}}]\label{thm:Engel}
	Within this setting, let the operators $\calA$, $D_0$ be as defined in \eqref{eq:calA} and \eqref{eq:D}, and suppose that $A_0$ is invertible. 
	The operator $\calA$ is the generator of a $C_0$-semigroup 
	if and only if for each $t\ge 0$ the operator $Q(t) \colon \dom(B) \to E$ extends to a bounded linear operator $Q(t)\colon F \to E$ with its operator norm satisfying
	\begin{equation}
	\label{eq:Qtb}
	\limsup_{t\downto 0}\|Q(t)\|<\infty.
	\end{equation}
	The semigroup $\calT$ generated by $\calA$ is then given as
	\begin{equation}
	\label{eq:calT}
	\calT(t)=\begin{pmatrix}T_0(t)& Q(t)\\0 & S(t)\end{pmatrix}.
	\end{equation}
\end{theorem}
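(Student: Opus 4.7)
My plan is to reduce the coupled operator $\calA$ to a triangular operator matrix by a bounded similarity and then construct its semigroup by the variation-of-constants formula. The decomposition $\dom(A_m)=\dom(A_0)\oplus\ker(A_m)$ suggests the bounded isomorphism $\Phi\colon E\times F\to E\times F$, $\Phi(x,y):=(x-D_0 y,y)$, with bounded inverse $\Phi^{-1}(\tilde x,\tilde y)=(\tilde x+D_0\tilde y,\tilde y)$. Using $A_m D_0=0$ and $LD_0=I_F$, one checks that $\Phi$ maps $\dom(\calA)$ bijectively onto $\dom(A_0)\times\dom(B)$ and conjugates $\calA$ into the upper-triangular operator matrix
\[
\widetilde\calA:=\Phi\calA\Phi^{-1}=\begin{pmatrix}A_0 & -D_0 B\\ 0 & B\end{pmatrix}
\]
on $\dom(A_0)\times\dom(B)$. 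Since $\Phi$ is a topological isomorphism, $\calA$ generates a $C_0$-semigroup $\calT$ if and only if $\widetilde\calA$ generates a $C_0$-semigroup $\widetilde\calT$, in which case $\calT(t)=\Phi^{-1}\widetilde\calT(t)\Phi$.

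For the ``if'' direction, I would build the candidate semigroup for $\widetilde\calA$ from the two subflows $T_0$ and $S$ by Duhamel's formula, obtaining on $\dom(A_0)\times\dom(B)$ the expression $\widetilde\calT(t)(\tilde x_0,\tilde y_0)=(T_0(t)\tilde x_0+Q_0(t)\tilde y_0,\,S(t)\tilde y_0)$ with $Q_0(t)$ from \eqref{eq:Q0tdef}. Conjugating by $\Phi$ and using the integration-by-parts identity \eqref{eq:Qtdef} yields precisely the matrix \eqref{eq:calT}, whose upper-right entry is $Q(t)$. Under hypothesis \eqref{eq:Qtb}, each $Q(t)$ extends to a bounded operator on $F$ and $\calT(t)$ becomes a uniformly bounded family on some small interval $[0,t_0]$. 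The semigroup property reduces to the convolution identity $Q(t+s)=T_0(t)Q(s)+Q(t)S(s)$, which I would first verify on the dense subspace $\dom(B)$ via Fubini together with the semigroup laws of $T_0$ and $S$, and then extend by density. Strong continuity $Q(t)y\to 0$ as $t\downarrow 0$ is immediate on $\dom(B)$ from \eqref{eq:Q0tdef} and extends by density using the uniform bound on $\|Q(t)\|$. Finally, identifying $\calA$ as the generator amounts to differentiating \eqref{eq:calT} at $t=0$ on $\dom(\calA)$: after the decomposition $x_0=\tilde x_0+D_0 y_0$, the $\pm D_0 B y_0$ contributions coming from $D_0 S(t)y_0$ and $Q_0(t)y_0$ cancel, leaving $A_0\tilde x_0=A_m x_0$ as required.

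For the ``only if'' direction, suppose $\calA$ generates a semigroup $\calT$ that is bounded by some $M$ on $[0,t_0]$. On $\dom(\calA)$ the orbit $\calT(\cdot)(x_0,y_0)$ is the classical solution of $\dot x=A_m x$, $\dot y=By$, $Lx=y$, and the same Duhamel/decomposition argument forces the representation \eqref{eq:calT}. Specialising to the initial data $(D_0 y_0,y_0)\in\dom(\calA)$ with $y_0\in\dom(B)$, the uniform bound on $\calT(t)$ and the triangle inequality yield $\|Q(t)y_0\|\le\bigl(M(1+\|D_0\|)+\|T_0(t)\|\,\|D_0\|\bigr)\|y_0\|$, so $Q(t)$ is uniformly bounded on the dense set $\dom(B)\subseteq F$, hence extends to $F$, and \eqref{eq:Qtb} follows. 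The principal obstacle throughout is the unboundedness of the off-diagonal term $-D_0 B$ in $\widetilde\calA$: the parabolic smoothing of $T_0$ inside the convolution \eqref{eq:Q0tdef} is what lets $Q_0(t)$, and hence $Q(t)$, extend continuously from $\dom(B)$ to all of $F$, and condition \eqref{eq:Qtb} is precisely the minimal integrated-smoothing requirement that turns the formal triangular matrix into a genuine $C_0$-semigroup.
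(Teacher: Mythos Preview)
The paper does not prove this theorem; it merely quotes it from \cite{CENN}. So there is no ``paper's own proof'' to compare against. That said, your outline is essentially the argument of \cite{CENN}, and the similarity $\Phi$ you introduce is exactly the transformation $\calR_0$ that the present paper later uses to derive its splitting scheme (see the discussion around $\calA_0=\calR_0\calA\calR_0^{-1}$).

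Two points to tighten. First, in the ``if'' direction, differentiating $\calT(t)$ at $t=0$ on $\dom(\calA)$ shows only that the generator of $\calT$ \emph{extends} $\calA$; you still need equality. The clean way is to exhibit some $\lambda\in\rho(\calA)$: pick $\lambda$ in $\rho(A_0)\cap\rho(B)$ and compute the resolvent of the triangular $\widetilde\calA$ explicitly (this uses the closedness assumption~(v)). Since the generator also has $\lambda$ in its resolvent set and contains $\calA$, equality follows.

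Second, your closing sentence about ``parabolic smoothing of $T_0$'' is misleading in this context. Theorem~\ref{thm:Engel} does \emph{not} assume that $T_0$ is analytic; condition~\eqref{eq:Qtb} is a standing hypothesis here, not something you derive from smoothing. Analyticity enters only later in the paper (assumption~(vii) and Remark~\ref{remark:analytic remarks}(a)) as a \emph{sufficient} condition guaranteeing~\eqref{eq:Qtb}. So in the proof of the theorem itself you should treat~\eqref{eq:Qtb} as the black-box input that makes the formal triangular matrix a bounded, strongly continuous family, without invoking any regularising mechanism.
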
 
In other words, if the conclusion of Theorem~\ref{thm:Engel} holds, then  the linear problem $\dot \vecu = \calA \vecu$ is well-posed and the solution with initial value $\vecu_0 = (u_0,v_0)$ is given by the semigroup as $\calT(t)\vecu_0$.

We augment the list of general conditions (i)--(v) by further assuming: 
\begin{iiv}\setcounter{enumi}{5}
	\item Additionally to $A_0$ also operator $B$ is invertible. 
	\item The operators $A_0$ and $B$ generate boun\-ded analytic semigroups. 
\end{iiv}
The operators appearing in Example~\ref{examp:Lip} satisfy all of these conditions.

\begin{remark}
	\label{remark:analytic remarks}
	\begin{abc}
		\item By Corollary 2.8 in \cite{CENN} the assumption in (vii) implies that $\calA$ is the generator of an analytic $C_0$-semigroup on $E\times F$.
		\item The invertibility of $A_0$ or $B$ is merely a technical assumption which slightly simplifies the proofs and assumptions, avoiding a shifting argument.
		\item In principle, one can drop the assumption of $B$ being the generator of an analytic semigroup. In this case minor additional assumptions on the nonlinearity $\calF$ are needed, and the error bound for the numerical method will look slightly differently. We will comment on this in Remark~\ref{remark:finalafterproof} below, after the proof of the main theorem.
		\item The fact that $A_0$ generates a bounded analytic semigroup $T_0$ implies the bound
		$\sup_{t\geq0}\|tA_0T_0(t)\|\leq M$, see, e.g., \cite[Theorem~II.4.6 (c)]{EngelNagel}.
	\end{abc}
\end{remark}

For further details on analytic semigroups we refer to the monographs \cite{Pazy,Lunardi,EngelNagel,Haase}.

\subsection*{The abstract semilinear problem}

We now turn our attention to semilinear boundary coupled problems \eqref{eq:main0}.
In particular we will give our precise assumptions related to the solutions of the semilinear problem, and to the nonlinearity $\calF=(\calF_1,\calF_2)\colon \dom(\calF)\subseteq E\times F \to E \times F$.

\begin{assumptions}
	\label{ass:solution} 
	The function $\vecu:=(u,v)\colon[0,\tmax]\to E\times F$, $\tmax>0$, is a mild solution of the problem \eqref{eq:main0}, written on $E \times F$ as
	\begin{equation}
	\label{eq:abstract evolution equation}
	\dot \vecu = \calA \vecu + \calF(\vecu) ,
	\end{equation}
	i.e.~it satisfies the variation of constant formula:
	\begin{equation}\label{eq:voc}
	\vecu(t) = \calT(t)\vecu_0+\int_0^t\calT(t-s)\calF(\vecu(s))\dd s.
	\end{equation}
	We further assume that the exact solution $\vecu$ has the following properties:
	\begin{numerA}
		\item \label{item:1} The function $\calF:\str\to E\times F$ is Lipschitz continuous on the strip, for an $R>0$, 
		\begin{equation*}
		\str:= \str_R := \{\vecv\in E\times F\st \|\vecu(t)-\vecv\|\leq R\text{ for some $t\in [0,\tmax]$}\}\subseteq \dom(\calF)
		\end{equation*}
		around the exact solution with constant $\LIP$.
		\item \label{item:2} The second component $\calF_2:\str\to \dom(B)$ is Lipschitz continuous on $\str$, with constant $\LIPB$. 
		\item \label{item:3} For each $t\in [0,\tmax]$ $v(t)=\vecu(t)|_2\in \dom(B^2)$, and $\sup_{t\in[0,\tmax]}\|B^2v(t)\|<\infty$ (here $|_2$ denotes the projection onto the second component). 
		\item \label{item:4} The second component along the solution satisfies $\calF_2(\vecu(t))\in \dom(B^2)$ for each $t\in [0,\tmax]$, and $\sup_{t\in[0,\tmax]}\|B^2\calF_2(\vecu(t))\|<\infty.$
		\item \label{item:5} Furthermore, $\calF\circ \vecu$ is differentiable and $(\calF\circ \vecu)'\in \Ell^1([0,\tmax];E \times F)$. %
	\end{numerA}
\end{assumptions}

The local Lipschitz continuity in strip is ensured if $\calF$ is assumed to be locally Lipschitz continuous on bounded sets, see, e.g., \cite{HochbruckOstermann} and \cite{dynbc}. Later in the convergence analysis we will show that the numerical solution stays sufficiently close to the exact solution. We further comment on these assumptions, and their relation to those in \cite{dynbc_PDAE_splitting}, after the proof in Remark~\ref{remark:finalafterproof}. Later on in the convergence proof, we will directly reference these properties by their number ($\cdot$).

\subsection*{The numerical method}

We are now in the position to derive the numerical method. For a time step $\tau>0$, for all $t_n = n\tau\in[0,\tmax]$, we define the numerical approximation $\vecu_n=(u_n,v_n)$ to $\vecu(t_n)=(u(t_n),v(t_n))$ via the following steps.

\begin{enumsteps}
	\item We approximate the integral in \eqref{eq:voc} by an appropriate quadrature rule.
	\item We approximate the semigroup operators $\calT$ by using an operator splitting method. Due to its special form \eqref{eq:calT}, this includes the approximation of the convolution $Q_0$, defined in \eqref{eq:Qtdef}, by an operator $V$. The choice of $V$ is determined by the used splitting method, see \cite[Section~3]{CsEF} and below.
\end{enumsteps}
In what follows we describe the numerical method by using first-order approximations in \emph{Steps 1--2}, and show its first-order convergence. We note here that the application of a correctly chosen exponential integrator could be inserted as a preliminary step, see \cite{HochbruckOstermann}. Since it eliminates the integral's dependence on $\vecu(s)$, the quadrature rule simplifies in \emph{Step 1}. This approach, however, leads to the same numerical method as \emph{Steps 1--2}.

\medskip
Before proceeding as proposed, for all $\tau>0$, we rewrite formula \eqref{eq:voc} at $t=t_n=t_{n-1}+\tau$ as
\begin{align}
\label{eq:VOC}
\vecu(t_n) = \calT(\tau)\vecu(t_{n-1})+\int_0^\tau\calT(\tau-s)\calF(\vecu(t_{n-1}+s))\dd s.
\end{align}
Now, according to \emph{Step 1}, we approximate the integral by the left rectangle rule leading to
\begin{equation*}
\vecu(t_n)\approx \calT(\tau)\vecu(t_{n-1})+ \tau\calT(\tau)\calF(\vecu(t_{n-1}))= \calT(\tau) \Big( \vecu(t_{n-1})+\tau\calF(\vecu(t_{n-1})) \Big) ,
\end{equation*}
for any $t_n = n\tau\in(0,\tmax]$. 

In \emph{Step 2}, we apply the Lie splitting, which, according to \cite{CsEF}, results in the approximation of the convolution operator $Q_0(t)$ by an appropriate $V(t)$ (to be specified later). Altogether, we approximate the semigroup operators $\calT(\tau)$ by
\begin{equation}\label{eq:def calTT}
\calTT(\tau) = \begin{pmatrix} T_0(\tau)&V(\tau) + D_0 S(\tau)-T_0(\tau)D_0\\0&S(\tau)\end{pmatrix}.
\end{equation}
We remark that $\calTT(\tau)=\calR_0^{-1}\TT(\tau)\calR_0$ holds with the notations introduced in \cite{CsEF}:
\begin{equation*}
\TT(\tau)=\begin{pmatrix} T_0(\tau)&V(\tau)\\0&S(\tau) \end{pmatrix} \quad\text{and}\quad
\calR_0=\begin{pmatrix} I&-D_0\\0&I \end{pmatrix}.
\end{equation*}
This leads to the numerical method approximating $\vecu$ at time $t_n = n\tau \in [0,\tmax]$:
\begin{equation}
\label{eq:method1}
\vecu_n 
:= \bfL(\tau)(\vecu_{n-1}) 
:= \calTT(\tau)\big(\vecu_{n-1}+\tau\calF(\vecu_{n-1})\big) ,
\end{equation}
with $\vecu_0:=(u_0,v_0)$.

The actual form of operator $V(\tau)$ depends on the underlying splitting method. Here, we will use the Lie splitting of the operator $\calA_0:=\calR_0\calA\calR_0^{-1}$, proposed in \cite[Section~3]{CsEF}. Notice that
\begin{equation*}
\calA_0 = \begin{pmatrix}A_0&-D_0B\\0&B\end{pmatrix}\quad\text{with\quad $\dom(\calA_0)=\dom(A_0)\times \dom(B)$},
\end{equation*}
i.e., the similarity transformation $\calR_0$, diagonalizes the domain of the formally diagonal operator $\calA$ with non-diagonal domain by decoupling the two components, cf.{} \eqref{eq:calA}. \blueon Indeed, $\vect{x}{y}\in\dom(\calA_0)$ if and only if $\vect{x+D_0y}{y}=\calR_0^{-1}\vect{x}{y}\in\dom(\calA)$, i.e., if $x+D_0y\in \dom(A_m)$, $L(x+D_0y)=y$  and $y\in \dom(B)$. These hold if and only if $x\in \dom(A_m)$ (since $D_0y\in \dom(A_m)$), $x\in \ker(L)$ (since $LD_0y=y$) and $y\in \dom(B)$; this proves that $\calA_0$ has diagonal domain, and also the above form of $\calA_0$ follows at once, since for $x\in \ker(L)$ one has $A_0x=A_mx$. Now the  prize to be paid for this diagonalization of the domain \blueoff is the new unbounded term $-D_0B$ making $\calA_0$ upper triangular. Notice also that the decoupling via the similarity transformation changes the diagonal term $A_m$ into $A_0$. This technique is an abstract operator theoretic version of homogenizing boundary conditions.
Now, the splitting is  given by $\calA_0=:\calA_1+\calA_2+\calA_3$ with 
\begin{equation*}
\calA_1 = \begin{pmatrix}A_0&0\\0&0\end{pmatrix},
\quad 
\calA_2 = \begin{pmatrix}0&-D_0B\\0&0\end{pmatrix}, 
\quad 
\calA_3 = \begin{pmatrix}0&0\\0&B\end{pmatrix},
\end{equation*}
and $\dom(\calA_1)=\dom(A_0)\times F$, $\dom(\calA_2)=E\times\dom(B)$, $\dom(\calA_3)=E\times\dom(B)$. It was shown in \cite[Prop.~3.2.]{CsEF} that the operator \emph{parts}, see \cite[Section II.2.3]{EngelNagel}, $\calA_1|_{E\times \dom(B)}$, $\calA_2|_{E\times \dom(B)}=\calA_2$ and $\calA_3|_{E\times \dom(B)}$ generate the strongly continuous semigroups\footnote{Note that the domain of definition of the part $\calA_3|_{E\times \dom(B)}$ is $E\times \dom(B^2)$, and that, in fact, $\calA_2|_{E\times \dom(B)}=\calA_2$.} 
\begin{equation*}
\calT_1(\tau) = \begin{pmatrix}T_0(\tau)&0\\0&I\end{pmatrix}, \quad \calT_2(\tau) = \begin{pmatrix}I&-\tau D_0B\\0&I\end{pmatrix}, \quad \calT_3(\tau) = \begin{pmatrix}I&0\\0&S(\tau)\end{pmatrix} , 
\end{equation*}
respectively, on $E\times\dom(B)$. Then the application of the Lie splitting as $\calTT(\tau)=\calR_0^{-1}\calT_1(\tau)\calT_2(\tau)\calT_3(\tau)\calR_0$ leads to the formula \eqref{eq:def calTT} with
\begin{equation}\label{eq:def V}
V (\tau)= -\tau T_0(\tau)D_0BS(\tau).
\end{equation}
Thus, the Lie splitting transfers the coupled linear problem into the sequence of simpler ones. First we solve the equation $\dot v=Bv$ on $\dom(B)$ by using the original initial condition $v_0$, then we propagate the solution by $\calT_2(\tau)$, which serves as an initial condition to the homogeneous problem $\dot u=A_0u$ on $E$. To get an approximation at $t_n=n\tau$, the semilinear expressions and the terms coming from the ``diagonalisation'' should be treated. Then the whole process needs to be cyclically performed $n$ times.

\medskip
We note that the approximation $Q_0(\tau)\approx V(\tau)=-\tau T_0(\tau)D_0BS(\tau)$ can also be obtained by using an appropriate convolution quadrature, i.e.~by approximating $T_0(\tau-\xi)$ from the left (at $\xi=0$) and $S(\xi)$ from the right (at $\xi=\tau$).

\medskip
Upon plugging in the splitting approximation \eqref{eq:def V} into the convolution $Q_0(\tau)$, and by introducing the intermediate values
\begin{subequations}
	\label{eq:methodcomp min D_0}
	\begin{equation}
	\begin{aligned}
	\widetilde u_n &= u_{n-1}+\tau \calF_1(u_{n-1},v_{n-1}), \\
	\widetilde v_n &= v_{n-1}+\tau \calF_2(u_{n-1},v_{n-1}),
	\end{aligned}
	\end{equation} 
	the method \eqref{eq:method1} reads componentwise as
	\begin{equation}
	\begin{aligned}
	u_n = &\ T_0(\tau) \Big( \widetilde u_{n} - D_0 \big( \widetilde v_{n} + \tau B v_{n} \big) \Big) + D_0 v_n , \\
	v_n = &\ S(\tau) \widetilde v_n.
	\end{aligned}
	\end{equation}
\end{subequations}
This formulation only requires two applications of the Dirichlet operator $D_0$ per time step. 
We point out that the two terms with the Dirichlet operator can be viewed as correction terms which correct the boundary values of the bulk-subflow along the splitting method.

\subsection*{The main result}

We are now in the position to state the main result of this paper, which asserts first order (up to a logarithmic factor) error estimates for the approximations obtained by the splitting integrator \eqref{eq:method1} (with \eqref{eq:def V}), i.e.~\eqref{eq:methodcomp min D_0}, separating the bulk and surface dynamics in $E$ and $F$.

\begin{theorem}
	\label{theorem:Lie}
	In the above setting, 
	let $\vecu:[0,\tmax]\to E\times F$ be the solution of \eqref{eq:main0} subject to the conditions in Assumptions~\ref{ass:solution} and consider the approximations $\vecu_n$ at time $t_n$ determined by the splitting method \eqref{eq:method1} (with \eqref{eq:def V}), or written in componentwise form \eqref{eq:methodcomp min D_0}. Then there exists a $\tau_0>0$ and $C>0$ such that for any time step $\tau \leq \tau_0$ we have at time $t_n=n\tau \in [0,\tmax]$ the error estimate
	\begin{equation}
	\label{eq:Lie error estimate}
	\|\vecu(t_n)-\vecu_{n}\| \leq C \, \tau \, |\log(\tau)| .
	\end{equation}
	The constant $C>0$ is independent of $n$ and $\tau>0$, but depends on $\tmax$, on constants related to the semigroups $T_0$ and $S$, as well as on the exact solution $\vecu$. 
\end{theorem}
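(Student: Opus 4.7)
My plan is a Lady-Windermere-style error recursion, driven by the parabolic smoothing estimate $\|tA_0T_0(t)\|\leq M$ and the $\dom(B^2)$-regularity granted by \ref{item:3}--\ref{item:4}, and closed by an induction that simultaneously yields the error bound and keeps the iterates $\vecu_k$ inside the strip $\str$ on which $\calF$ is Lipschitz.

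First I would subtract the scheme \eqref{eq:method1} from the variation-of-constants identity \eqref{eq:VOC} to produce
\begin{equation*}
\bfE_n := \vecu(t_n)-\vecu_n = \calTT(\tau)\bfE_{n-1} + \tau\,\calTT(\tau)\bigl(\calF(\vecu(t_{n-1}))-\calF(\vecu_{n-1})\bigr) + \bm{d}_n ,
\end{equation*}
with local defect $\bm{d}_n = \bm{d}_n^{\mathrm{spl}} + \bm{d}_n^{\mathrm{quad}}$, where $\bm{d}_n^{\mathrm{spl}} = (\calT(\tau)-\calTT(\tau))\vecu(t_{n-1})$ and
\begin{equation*}
\bm{d}_n^{\mathrm{quad}} = \int_0^\tau \calT(\tau-s)\calF(\vecu(t_{n-1}+s))\dd s - \tau\,\calTT(\tau)\calF(\vecu(t_{n-1})) .
\end{equation*}
Comparing \eqref{eq:calT} with \eqref{eq:def calTT} and using \eqref{eq:Qtdef}, the only nonzero component of $\bm{d}_n^{\mathrm{spl}}$ is the $E$-coordinate $(Q_0(\tau)-V(\tau))v(t_{n-1})$, i.e.\ the discrepancy between the exact convolution and its one-point quadrature; combining the analytic smoothing of $T_0$ with \ref{item:3} and the technical lemmas for the linear case from \cite{CsEF} yields a per-step bound of order $\tau^2|\log\tau|$. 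Decomposing $\bm{d}_n^{\mathrm{quad}}$ into $\int_0^\tau \calT(\tau-s)(\calF(\vecu(t_{n-1}+s))-\calF(\vecu(t_{n-1})))\dd s$ plus $\int_0^\tau (\calT(\tau-s)-\calTT(\tau))\calF(\vecu(t_{n-1}))\dd s$, the first piece is $O(\tau^2)$ by \ref{item:5} and a Taylor expansion, while the second invokes \ref{item:4} to re-apply the linear splitting estimate and is again of order $\tau^2|\log\tau|$.

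Next I would iterate the recursion, relying on the uniform power-boundedness $\|\calTT(\tau)^k\|\leq K$ for $k\tau\leq \tmax$ that follows from the linear stability analysis in \cite{CsEF} under the analyticity assumptions (vi)--(vii). Provided every intermediate iterate lies in $\str$ so that \ref{item:1} applies, one obtains
\begin{equation*}
\|\bfE_n\| \leq K\sum_{k=1}^{n}\|\bm{d}_k\| + \tau K\LIP\sum_{k=0}^{n-1}\|\bfE_k\| ,
\end{equation*}
and a discrete Gronwall inequality together with the $\tau^2|\log\tau|$ defect bound yields $\|\bfE_n\|\leq C\tau|\log\tau|$ with $C$ independent of $n$ and $\tau$.

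The main obstacle, and the reason stability and consistency cannot be cleanly separated here, is reconciling the Lipschitz hypothesis with the fact that it is valid only on $\str$. This is handled by an induction: assuming $\vecu_0,\ldots,\vecu_{n-1}\in\str$ gives $\|\bfE_n\|\leq C\tau|\log\tau|$, and a choice of $\tau_0$ with $C\tau_0|\log\tau_0|\leq R$ forces $\vecu_n\in\str$ as well, so the argument propagates up to $n\tau\leq\tmax$. The logarithmic factor is intrinsic: it originates in the linear splitting defect, reflecting that $\mathrm{ran}(D_0)\subseteq\ker(A_m)$ is almost disjoint from $\dom(A_0)$, so no additional regularity of the exact solution can convert it into a further power of $\tau$.
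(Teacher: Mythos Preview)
Your outline has the right architecture (Lady Windermere recursion, induction to stay in $\str$, Gronwall), but the two central quantitative claims both fail, and fixing them forces exactly the coupling of stability and consistency that the paper carries out.

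First, the claimed uniform bound $\|\calTT(\tau)^k\|\leq K$ for $k\tau\leq\tmax$ is not available. Lemma~\ref{lem:calTTpower} (which is the relevant stability result, not a uniform operator bound from \cite{CsEF}) gives only
\[
\|\calTT(\tau)^k\vect{x}{y}\|\leq M\|\vect{x}{y}\|+M\|By\|\quad\text{or}\quad \leq M(1+\log k)\|\vect{x}{y}\|,
\]
so either you pick up an extra $\log$-factor, or you must control the argument in the $\dom(B)$ graph norm, which is precisely why assumption \ref{item:2} enters in part (iii) of the proof when propagating the nonlinear differences.

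Second, and more seriously, the local splitting defect $(Q_0(\tau)-V(\tau))v(t_{n-1})$ is \emph{not} of order $\tau^2|\log\tau|$ in $E$; a direct estimate (see the beginning of part (i) in the paper) gives only $O(\tau)$. Lemma~\ref{lem:loclieA} yields an $O(\tau^2)$ bound, but for the expression with $A_0^{-1}$ inserted; the actual defect is $A_0$ applied to that, and $A_0$ is unbounded. The missing factor of $\tau$ is recovered only after multiplying by $\calTT(\tau)^j$ with $j\geq 1$ and extracting $A_0T_0(j\tau)$ from the leading $T_0$-block, so that parabolic smoothing supplies $\|A_0T_0(j\tau)\|\leq M/(j\tau)$. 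This is the content of Lemmas~\ref{lem:delta1} and \ref{lem:delta2}: one bounds the \emph{product} $\calTT(\tau)^j(\calT(\tau)-\calTT(\tau))$ and $\calTT(\tau)^j(\calT(\tau-s)-\calT(\tau))$, obtaining $C\tau^2\|A_0T_0(j\tau)\|\cdot(\cdots)$, and then sums $\tau\sum_{j=1}^{n-1}\tfrac{1}{j}\leq C\tau\log n$. In other words, the reason ``stability and consistency cannot be cleanly separated'' is not merely the strip issue you mention; it is that neither the local defect bound nor the power bound is good enough on its own, and only their combination via smoothing closes the estimate.
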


The proof of this result will be given in Section \ref{sec:proof} below. In the next section we state and prove some preparatory and technical results needed for the error estimates.

Recall that the splitting method \eqref{eq:method1}, written componentwise \eqref{eq:methodcomp min D_0}, decouples the bulk and surface flows, which can be extremely advantageous if the two subsystems behave in a substantially different manner. We remind that, when applied to PDEs with dynamic boundary conditions, naive splitting schemes suffer from order reduction, see \cite[Section~6]{dynbc}, and a correction in \cite{dynbc_PDAE_splitting}.

We make the following remark about the logarithmic factor  in the above error estimate. Inequality \eqref{eq:Lie error estimate} implies that for any $\varepsilon\in (0,1)$	we have $\|\vecu(t_n)-\vecu_{n}\| \leq C' \, \tau^{1-\varepsilon}$ with another constant $C'$. This amounts to saying that the proposed method has convergence order arbitrarily close to $1$, and in fact this is also what the numerical experiments show. 
Indeed, numerical experiments in Section~\ref{section:numerics} illustrate and complement the first-order error estimates of Theorem~\ref{theorem:Lie}, including an example with \emph{dynamic boundary conditions}, without any order reductions.

\section{Preparatory results}
\label{section:prep}

In this section we collect some general technical results which will be used later on in the convergence proof.
After a short calculation, or by using the results in  Section~3 of \cite{CsEF}, we obtain
\begin{align}
\label{eq:powers of calTT}
\calTT(\tau)^k 
= &\
\begin{pmatrix}
T_0(k\tau) & -T_0(k\tau) D_0 + D_0 S(k\tau) + V_k(\tau) \\
0 & S(k\tau) 
\end{pmatrix} , \\ 
\nonumber
\text{where} \qquad V_k(\tau) y 
= &\ \sum_{j=0}^{k-1} T_0((k-1-j)\tau) V(\tau) S(j\tau) y,
\end{align}
see~\cite[equation~(3.9)]{CsEF}.
Now we are in the position to prove exponential bounds for the powers of $\calTT(\tau)$.
\begin{lemma}\label{lem:calTTpower}
	There exist a constant $M > 0$  such that for $\tau > 0$ and $\calTT(\tau)$ defined in \eqref{eq:def calTT} (with \eqref{eq:def V}), and  for any $(x,y) \in E \times \dom(B)$ and $k \in \NN$ with $k\tau\in [0,\tmax]$
	\begin{equation*}
	\|\calTT(\tau)^k \vect{x}{y} \| \leq M \|\vect{x}{y}\| + M   \|B y\| .
	\end{equation*}
	Moreover, if $S$ is a bounded analytic semigroup, then we have
	\begin{equation*}
	\|\calTT(\tau)^k \vect{x}{y} \| \leq M (1+\log(k)) \|\vect{x}{y}\|.
	\end{equation*}
\end{lemma}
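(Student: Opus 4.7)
\medskip

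\noindent\textbf{Proof plan.} The plan is to read off the individual entries of the matrix representation \eqref{eq:powers of calTT}, bound them one by one, and reduce everything to estimating the discrete convolution $V_k(\tau)$. For the ``easy'' blocks, the hypothesis~(vii) (bounded analytic semigroups) gives uniform bounds $\|T_0(k\tau)\|,\|S(k\tau)\|\leq M$, and the Dirichlet operator $D_0$ is bounded by construction \eqref{eq:D}; consequently each of $T_0(k\tau)x$, $S(k\tau)y$, $T_0(k\tau)D_0y$, $D_0S(k\tau)y$ is controlled by a constant multiple of $\|x\|+\|y\|$. The entire burden is then to estimate
\[
V_k(\tau)y=\sum_{j=0}^{k-1}T_0((k-1-j)\tau)V(\tau)S(j\tau)y
=-\tau\sum_{j=0}^{k-1}T_0((k-j)\tau)D_0\,B\,S((j+1)\tau)y,
\]
where we have substituted the explicit form \eqref{eq:def V} of $V(\tau)$.

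For the first assertion I would use that $y\in\dom(B)$ so that $B$ commutes with the semigroup: $BS((j+1)\tau)y=S((j+1)\tau)By$. Bounding each factor uniformly then yields
\[
\|V_k(\tau)y\|\leq \tau\sum_{j=0}^{k-1}M\|D_0\|M\,\|By\|\leq M\|D_0\|M\,t_{\max}\,\|By\|,
\]
because $k\tau\leq \tmax$. Combining this with the bounds on the remaining entries gives the first inequality (after relabelling constants).

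For the second assertion I would exchange the use of $\|By\|$ for the \emph{parabolic smoothing} of the bounded analytic semigroup $S$. By Remark~\ref{remark:analytic remarks}(d) (applied to $B$ in place of $A_0$), one has $\|tBS(t)\|\leq M$ for all $t>0$, and hence $\|BS((j+1)\tau)\|\leq M/((j+1)\tau)$. Substituting this into the expression for $V_k(\tau)$ (without requiring $y\in\dom(B)$) gives
\[
\|V_k(\tau)y\|\leq \tau\sum_{j=0}^{k-1}M\|D_0\|\,\frac{M}{(j+1)\tau}\,\|y\|
=M^2\|D_0\|\,\|y\|\sum_{j=1}^{k}\frac{1}{j}\leq M'(1+\log k)\|y\|.
\]
Together with the uniform bounds on the other entries of \eqref{eq:powers of calTT}, this yields the claimed logarithmic bound.

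\medskip

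\noindent\textbf{Main obstacle.} The argument is short because \eqref{eq:powers of calTT} is available; the only genuine issue is how to absorb the unbounded factor $B$ hidden in $V(\tau)$. There are two natural trade-offs: either we pay one application of $B$ on the initial datum (hence $\|By\|$ on the right-hand side and the first bound), or we pay a logarithmic price obtained by summing $\sum_{j=1}^k 1/j$ produced by the smoothing estimate of the analytic semigroup $S$. Keeping track of which factor is estimated where, and making sure the commutativity $BS(t)y=S(t)By$ on $\dom(B)$ is only used in the first case (so that the second case does not require extra regularity of $y$), is the point that needs the most care.
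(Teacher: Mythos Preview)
Your proof is correct and follows essentially the same route as the paper's own argument: expand $\calTT(\tau)^k$ via \eqref{eq:powers of calTT}, bound the diagonal and $D_0$-terms uniformly using boundedness of $T_0$, $S$, and $D_0$, and then estimate $V_k(\tau)y$ either by commuting $B$ through $S$ to get the $\|By\|$-bound or by invoking the analytic smoothing $\|BS((j+1)\tau)\|\leq M/((j+1)\tau)$ to obtain the harmonic sum and hence the $(1+\log k)$ factor. The paper's proof differs only in cosmetic bookkeeping of the semigroup factors.
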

\begin{proof}
	For the sum norm on the product space $E\times F$, we have 
	\begin{align*}
	\|\calTT(\tau)^k \vect{x}{y} \| &= \|T_0(k\tau) x-T_0(k\tau) D_0 y+D_0 S(k\tau) y+V_k(\tau)y\| + \|S(k\tau)y\| \\
	& \le \|T_0(k\tau) x\| + \|T_0(k\tau) D_0 y\|  + \|D_0 S(k\tau) y\| + \|V_k(\tau)y\| + \|S(k\tau)y\|.
	\end{align*}
	The exponential boundedness of the semigroups $T_0$ and $S$, and the boundedness of $D_0$ directly yield
	\begin{align*}
	\|T_0(k\tau) x\| + \|T_0(k\tau) D_0 y\|  + \|D_0 S(k\tau) y\| &\leq M \big( \|x\| + \|y\| \big) , \\  
	\text{and} \quad \|S(k\tau) y\| &\leq M  \|y\|.
	\end{align*}
	It remains to bound the term $V_k(\tau) y$. We obtain
	\begin{align*}
	\|V_k(\tau) y\|
	\leq &\ \tau \sum_{j=0}^{k-1} \| T_0((k-1-j)\tau) T_0(\tau) D_0 B S(\tau) S(j\tau) y \| \\
	\leq &\ \tau \sum_{j=0}^{k-1} \| T_0((k-j)\tau)  D_0 S((j+1)\tau) B y \| \\
	\leq &\ \tau \sum_{j=0}^{k-1} M_1   \|B y\| \leq \tmax  M_1   \|B y\|
	\leq  M \|B y\| ,
	\end{align*}
	which completes the proof of the first statement.
	
	\medskip If $S$ is a bounded analytic semigroup, then we improve the last estimate to
	\begin{align*}
	\|V_k(\tau)\| &= \sum\limits_{j=0}^{k-1}\|T_0\bigl((k-1-j)\tau\bigr)V(\tau)S(j\tau)\|\\
	&=	\tau\sum\limits_{j=0}^{k-1}\|T_0\bigl((k-j)\tau\bigr)\| \, \|D_0BS(\tau)S(j\tau)\|\\
	&\leq M_1M_2\|D_0\| \, \tau \sum_{j=0}^{k-1}\frac{1}{(j+1)\tau}\leq M(1+\log(k)).
	\end{align*}
	By putting the estimates together, the assertions follows.
\hfill \end{proof}

We recall the following lemma from \cite{CsEF}.
\begin{lemma}[{\cite[Lemma 4.4]{CsEF}}]\label{lem:loclieA}
	There is a $C\ge 0$ such that for every $\tau\in[0,\tmax]$, 
	for any $s_0,s_1\in [0,\tau]$, and for every $y\in \dom(B^2)$ we have
	\begin{equation*}
	\Bigl\|\int_{0}^{\tau} T_0(\tau-s)A_0^{-1}D_0 S(s)By\dd s-\tau T_0(\tau-s_0)A_0^{-1}D_0 S(s_1)By\Bigr\|
	\le C \tau^{2}(\|By\|+\|B^2 y\|).
	\end{equation*} 
\end{lemma}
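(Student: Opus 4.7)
The plan is a straightforward consistency estimate: I would bring the constant $\tau$ inside the integral, write the difference of the integrand at the running point $s$ and at the fixed evaluation points $(s_0,s_1)$ pointwise, and then integrate. Setting $\varphi(s):=T_0(\tau-s)A_0^{-1}D_0 S(s)By$, the left-hand side equals
\begin{equation*}
\int_0^\tau\bigl[\varphi(s) - T_0(\tau-s_0)A_0^{-1}D_0 S(s_1)By\bigr]\dd s,
\end{equation*}
and I would telescope the bracket by adding and subtracting $T_0(\tau-s_0)A_0^{-1}D_0 S(s)By$, obtaining a $T_0$-variation piece $\bigl[T_0(\tau-s)-T_0(\tau-s_0)\bigr]A_0^{-1}D_0 S(s)By$ and an $S$-variation piece $T_0(\tau-s_0)A_0^{-1}D_0\bigl[S(s)-S(s_1)\bigr]By$.

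For the first piece, the essential point is that $A_0^{-1}$ sits on the right of the $T_0$-difference, so the fundamental theorem of calculus for the analytic semigroup yields
\begin{equation*}
\bigl[T_0(\tau-s)-T_0(\tau-s_0)\bigr]A_0^{-1}=\int_{\tau-s_0}^{\tau-s} T_0(r)\,\dd r,
\end{equation*}
which is bounded in operator norm by $M|s-s_0|$ on $[0,\tmax]$. Combined with $\|D_0 S(s)By\|\le C\|By\|$ and integrating $s\mapsto|s-s_0|$ over $[0,\tau]$ (both $s,s_0\in[0,\tau]$) produces the bound $C\tau^2\|By\|$.

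For the second piece, the regularity $y\in\dom(B^2)$ allows the identity
\begin{equation*}
\bigl[S(s)-S(s_1)\bigr]By=\int_{s_1}^s S(r)B^2 y\,\dd r,
\end{equation*}
so its norm is at most $M|s-s_1|\,\|B^2y\|$, while the prefactor $T_0(\tau-s_0)A_0^{-1}D_0$ is uniformly bounded. Integrating over $s\in[0,\tau]$ gives the $C\tau^2\|B^2 y\|$ contribution, and summing the two pieces yields the claim. The only subtlety — and the one place where care is needed — is keeping $A_0^{-1}$ adjacent to the $T_0$-difference so that differentiation in $r$ produces the bounded object $T_0(r)=A_0 T_0(r)A_0^{-1}$ rather than an unbounded $A_0T_0(r)$; otherwise there is no real obstacle, as the argument is a classical ``rectangle rule'' consistency bound in which the two quadrature nodes $s_0$ and $s_1$ are permitted to differ, which is precisely why both $\|By\|$ and $\|B^2 y\|$ must appear on the right.
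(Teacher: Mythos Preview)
Your argument is correct. The paper does not actually prove this lemma; it merely recalls it from \cite[Lemma~4.4]{CsEF}, so there is no in-paper proof to compare against. Your telescoping into a $T_0$-variation piece and an $S$-variation piece, followed by the identities
\[
\bigl[T_0(\tau-s)-T_0(\tau-s_0)\bigr]A_0^{-1}=\int_{\tau-s_0}^{\tau-s}T_0(r)\,\dd r,
\qquad
\bigl[S(s)-S(s_1)\bigr]By=\int_{s_1}^{s}S(r)B^2y\,\dd r,
\]
is exactly the standard rectangle-rule consistency estimate one expects here, and it is fully consistent with the techniques the paper uses elsewhere (your first identity is precisely the content of Lemma~\ref{lem:firsttriv}). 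The regularity hypothesis $y\in\dom(B^2)$ is used exactly where you indicate, and your remark about keeping $A_0^{-1}$ adjacent to the $T_0$-difference is the right observation.
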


\noindent Using the above quadrature estimate we prove the following approximation lemma.
\begin{lemma}\label{lem:delta1}
	For $(x,y)\in E\times\dom(B^2)$ and $j\in\NN\setminus\{0\}$ we have
	\begin{equation*}
	\big\|\calTT(\tau)^j \big( \calT(\tau) - \calTT(\tau) \big) \vect{x}{y} \big\|\le C \tau^2 \|A_0 T_0(j\tau)\| \, \big(\|By\|+\|B^2y\|\big).
	\end{equation*}
\end{lemma}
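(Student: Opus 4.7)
The plan is to reduce the product $\calTT(\tau)^j\big(\calT(\tau)-\calTT(\tau)\big)\vect{x}{y}$ to a scalar estimate on $E$ by exploiting the block-triangular structure of both semigroup approximations, and then invoke Lemma~\ref{lem:loclieA} for the resulting quadrature error.

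First I would compute $\calT(\tau)-\calTT(\tau)$. By Theorem~\ref{thm:Engel}, \eqref{eq:def calTT}, and the decomposition $Q(\tau)=Q_0(\tau)+D_0S(\tau)-T_0(\tau)D_0$ from \eqref{eq:Qtdef}, the diagonal entries cancel, so
\begin{equation*}
\calT(\tau)-\calTT(\tau)=\begin{pmatrix}0 & Q_0(\tau)-V(\tau)\\ 0 & 0\end{pmatrix}.
\end{equation*}
Applied to $\vect{x}{y}$, the $x$-component drops out and only the element $\big(Q_0(\tau)-V(\tau)\big)y\in E$ survives in the first slot. Multiplying by $\calTT(\tau)^j$ using the power formula \eqref{eq:powers of calTT}, only the $T_0(j\tau)$ entry in the top-left of $\calTT(\tau)^j$ acts on it, so the task reduces to estimating $\big\|T_0(j\tau)\big(Q_0(\tau)-V(\tau)\big)y\big\|$ in $E$.

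Next I would pull out an $A_0$. Since $A_0$ is invertible and commutes with $T_0$, I write $T_0(j\tau)\big(Q_0(\tau)-V(\tau)\big)y=A_0T_0(j\tau)\cdot A_0^{-1}\big(Q_0(\tau)-V(\tau)\big)y$. Using \eqref{eq:Q0tdef} and \eqref{eq:def V}, together with the commutation of $B$ and $S$ on $\dom(B^2)$, a direct computation gives
\begin{equation*}
A_0^{-1}\big(Q_0(\tau)-V(\tau)\big)y=-\int_0^\tau T_0(\tau-s)A_0^{-1}D_0S(s)By\dd s+\tau T_0(\tau)A_0^{-1}D_0S(\tau)By.
\end{equation*}
This is, up to a sign, exactly the expression to which Lemma~\ref{lem:loclieA} applies with the choice $s_0=0$ and $s_1=\tau$, yielding the bound $C\tau^2(\|By\|+\|B^2y\|)$.

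Combining the two steps gives
\begin{equation*}
\big\|\calTT(\tau)^j\big(\calT(\tau)-\calTT(\tau)\big)\vect{x}{y}\big\|\le \|A_0T_0(j\tau)\|\cdot C\tau^2(\|By\|+\|B^2y\|),
\end{equation*}
which is the desired estimate. The only really non-routine ingredient is the algebraic identity expressing $A_0^{-1}(Q_0(\tau)-V(\tau))y$ in the form required by Lemma~\ref{lem:loclieA}; everything else is bookkeeping on the block structure. I do not anticipate any genuine obstacle, but care is needed to ensure that the manipulations $A_0^{-1}\int T_0(\tau-s)D_0S(s)By\dd s=\int T_0(\tau-s)A_0^{-1}D_0S(s)By\dd s$ and $S(\tau)By=BS(\tau)y$ are justified, which is guaranteed by the hypothesis $y\in\dom(B^2)$ and the closedness of $A_0^{-1}$.
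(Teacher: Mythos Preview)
Your proposal is correct and follows essentially the same approach as the paper's proof: both compute $\calT(\tau)-\calTT(\tau)$ as having the single non-trivial entry $Q_0(\tau)-V(\tau)$, reduce the action of $\calTT(\tau)^j$ to $T_0(j\tau)$ on the first component, factor out $A_0T_0(j\tau)$, and then apply Lemma~\ref{lem:loclieA} with $s_0=0$, $s_1=\tau$. The only cosmetic difference is that the paper writes out the intermediate expressions explicitly, while you describe the same manipulations more conceptually; your remark about justifying the interchange of $A_0^{-1}$ with the integral can in fact be simplified, since $A_0^{-1}$ is bounded (not merely closed) by assumption~(iii).
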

\begin{proof}
	Using the formula \eqref{eq:powers of calTT} for $\calTT(\tau)^j$ and a direct computation for the difference $\calT(\tau) - \calTT(\tau)$, we obtain
	\begin{align*} 
	&\ \calTT(\tau)^j \big( \calT(\tau) - \calTT(\tau) \big) \vect{x}{y} \\
	= &\ \calTT(\tau)^j \ \bigg(- \int_0^\tau T_0(\tau-\xi) D_0 B S(\xi)y\dd\xi + \tau T_0(\tau) D_0 B S(\tau)y  \ , \ 0 \ \bigg)^\top \\
	= &\ \bigg( - T_0(\tau)^j \bigg( \int_0^\tau T_0(\tau-\xi) D_0 B S(\xi)y\dd\xi -\tau T_0(\tau) D_0 B S(\tau)y \bigg)\ , \ 0 \ \bigg)^\top
	\end{align*}
	for all $(x,y)\in E\times\dom(B)$. We can further rewrite the first component as
	\begin{align*} 
	&\ T_0(j\tau) \bigg( \int_0^\tau T_0(\tau-\xi) D_0 B S(\xi)y\dd\xi -\tau T_0(\tau) D_0 B S(\tau)y \bigg) \\
	= &\ A_0 T_0(j\tau) \, \bigg( \int_0^\tau T_0(\tau-\xi) A_0^{-1}D_0 B S(\xi)y\dd\xi -\tau T_0(\tau) A_0^{-1}D_0 B S(\tau)y \bigg).
	\end{align*}
	We have
	\begin{align*}
	&\ \big\| \calTT(\tau)^j \big( \calT(\tau) - \calTT(\tau) \big) \vect{x}{y} \big\|  \\
	= &\ \bigg\| A_0 T_0(j\tau) \, \bigg( \int_0^\tau T_0(\tau-\xi) A_0^{-1}D_0 B S(\xi)y\dd\xi -\tau T_0(\tau) A_0^{-1}D_0 B S(\tau)y\bigg) \bigg\| \\
	\le &\ \|A_0 T_0(j\tau)\| \, \bigg\|\int_0^\tau T_0(\tau-\xi) A_0^{-1}D_0 B S(\xi)y\dd\xi -\tau T_0(\tau) A_0^{-1}D_0 B S(\tau)y\bigg\|,
	\end{align*}
	therefore an application of Lemma~\ref{lem:loclieA} with $s_0=0$ and $s_1=\tau$ proves the assertion.
\hfill \end{proof}

\begin{lemma}\label{lem:firsttriv}
	For $t,s\in [0,\tmax]$ we have
	\begin{align*}
	\|A_0^{-1}T_0(t)-A_0^{-1}T_0(s)\|\leq M|t-s|.
	\end{align*}
\end{lemma}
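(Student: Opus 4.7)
The plan is to use the standard identity that expresses $T_0(t)-T_0(s)$ as the time integral of its generator applied to a sufficiently regular element, in combination with the fact that $A_0$ is invertible. First I would reduce to the case $s\le t$ by symmetry. Then I would exploit that $A_0^{-1}$ is bounded and commutes with the semigroup $T_0$ on $E$: since $T_0(r)$ leaves $\dom(A_0)$ invariant and commutes with $A_0$ there, and since $\dom(A_0)$ is dense, the operator identity
\[
A_0^{-1}T_0(r)=T_0(r)A_0^{-1}\quad\text{on all of }E
\]
extends by boundedness.

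Next, for any $x\in E$ the element $y:=A_0^{-1}x$ lies in $\dom(A_0)$, so the classical formula for $C_0$-semigroups on their domain applies:
\[
T_0(t)y-T_0(s)y=\int_s^t T_0(r)A_0 y\dd r=\int_s^t T_0(r)x\dd r.
\]
Combining this with the commutation identity from the previous paragraph gives
\[
\bigl(A_0^{-1}T_0(t)-A_0^{-1}T_0(s)\bigr)x=\int_s^t T_0(r)x\dd r,
\]
which is the key structural identity: the quantity of interest is literally the integral of the semigroup.

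Finally, I would conclude by invoking assumption (vii), namely that $T_0$ is a bounded analytic semigroup, to get a uniform bound $\|T_0(r)\|\le M$ for all $r\ge 0$, so that
\[
\bigl\|A_0^{-1}T_0(t)-A_0^{-1}T_0(s)\bigr\|\le \int_s^t\|T_0(r)\|\dd r\le M|t-s|,
\]
after restoring the absolute value in the asymmetric case. No serious obstacle is expected; the only points requiring care are the commutation $A_0^{-1}T_0(r)=T_0(r)A_0^{-1}$ (which is where invertibility of $A_0$ from (iii) enters) and the use of uniform boundedness of $T_0$ (which is where analyticity and boundedness of the semigroup from (vii) enter, ensuring the constant is independent of $t,s\in[0,\tmax]$ and could even be taken on $[0,\infty)$).
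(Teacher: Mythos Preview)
Your proposal is correct and follows essentially the same approach as the paper: both arguments establish the identity $\bigl(A_0^{-1}T_0(t)-A_0^{-1}T_0(s)\bigr)x=\int_s^t T_0(r)x\dd r$ via the standard semigroup formula applied to $y=A_0^{-1}x\in\dom(A_0)$, and then use the uniform bound $\|T_0(r)\|\le M$. Your write-up is in fact a bit more explicit about the commutation $A_0^{-1}T_0(r)=T_0(r)A_0^{-1}$ than the paper's terse version, but the substance is identical.
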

\begin{proof}
	Resorting to the Taylor expansion we have for $x\in E$ that
	\begin{equation*}
	A_0^{-1}T_0(t)x-A_0^{-1}T_0(s)x=\int_s^t T_0(r)A_0^{-1}A_0x\dd r=\int_s^t T_0(r)x\dd r ,
	\end{equation*}
	which readily implies $\|A_0^{-1}T_0(t)x-A_0^{-1}T_0(s)x\|\leq M\|x\||t-s|$, and hence the assertion.
\hfill \end{proof}

\begin{lemma}\label{lem:thirdtriv}
	Let $f\colon[0,\tmax]\to E$ be Lipschitz continuous and consider
	\begin{equation*}
	(T_0*f)(t):=\int_0^t T_0(t-r)f(r)\dd r,\quad t\in [0,\tmax].
	\end{equation*}
	Then for all $t,s\in[0,\tmax]$ we have
	\begin{equation*}
	\|(T_0*f)(t)-(T_0*f)(s)\|\leq C|t-s|\|f\|_\Lip.
	\end{equation*}
\end{lemma}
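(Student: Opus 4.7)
The plan is to reduce to the case $s \le t$ by symmetry, then use a change of variables to bring both convolution integrals into a common form, and split the resulting difference into a short-interval term and a Lipschitz-difference term.

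More concretely, I would substitute $\sigma = t-r$ (respectively $\sigma = s-r$) to rewrite
\begin{equation*}
(T_0*f)(t) = \int_0^t T_0(\sigma) f(t-\sigma) \dd \sigma, \qquad (T_0*f)(s) = \int_0^s T_0(\sigma) f(s-\sigma) \dd \sigma,
\end{equation*}
and then for $s \le t$ split the difference as
\begin{equation*}
(T_0*f)(t) - (T_0*f)(s) = \int_s^t T_0(\sigma) f(t-\sigma) \dd \sigma + \int_0^s T_0(\sigma) \bigl[f(t-\sigma) - f(s-\sigma)\bigr] \dd \sigma.
\end{equation*}
The first summand is estimated using only the uniform bound $\|T_0(\sigma)\| \le M$ on $[0,\tmax]$ together with a sup bound on $f$, giving a contribution of the form $M \|f\|_{\infty}|t-s|$. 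The second summand is controlled by the Lipschitz property of $f$: writing $\|f(t-\sigma) - f(s-\sigma)\| \le \mathrm{Lip}(f) |t-s|$ and integrating over $[0,s] \subseteq [0,\tmax]$ yields at most $M \tmax \, \mathrm{Lip}(f) \, |t-s|$. Combining the two estimates and collecting constants into $C := M(1+\tmax)$ gives the claimed bound, with the convention that $\|f\|_{\Lip}$ denotes the full Lipschitz norm $\|f\|_{\infty} + \mathrm{Lip}(f)$.

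The only mildly subtle point is that the short-interval term $\int_s^t T_0(\sigma) f(t-\sigma)\dd \sigma$ naturally produces a factor of $\|f\|_\infty$ rather than the Lipschitz constant alone. Since $f$ is assumed Lipschitz on the compact interval $[0,\tmax]$ it is automatically bounded (by $\|f(0)\| + \mathrm{Lip}(f)\,\tmax$), so this supremum is finite and is either absorbed into $C$ or subsumed by interpreting $\|f\|_{\Lip}$ as the full Lipschitz norm. Beyond this bookkeeping point the proof is an entirely routine combination of a change of variables, the triangle inequality, and the Lipschitz bound; no analyticity, parabolic smoothing, or properties of $D_0$ enter.
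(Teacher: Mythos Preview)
Your proof is correct and follows essentially the same approach as the paper: the same change of variables $\sigma=t-r$, the same splitting of the difference into a short-interval piece $\int_s^t T_0(\sigma)f(t-\sigma)\dd\sigma$ and a Lipschitz-difference piece $\int_0^s T_0(\sigma)[f(t-\sigma)-f(s-\sigma)]\dd\sigma$, and the same estimates using the boundedness of $T_0$ and the Lipschitz norm of $f$. Your discussion of how the $\|f\|_\infty$ factor is absorbed into $\|f\|_\Lip$ also matches the paper's convention.
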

\noindent Here, by $\|\cdot\|_\Lip$ we denote the norm $\|f\|_\Lip := \|f\|_\infty + \ell_{\text{best}}(f)$, where $
\ell_{\text{best}}(f)\geq  0$ is the best Lipschitz constant of $f$. 
\begin{proof}
	For $t,s\in[0,\tmax]$, we have
	\begin{align*}
	& \big\| (T_0*f)(t)-(T_0*f)(s)\big\|  
	= \Bigl\|\int_0^t T_0(r)f(t-r)\dd r-\int_0^sT_0(r)f(s-r)\dd r\Bigr\|\\
	& \leq \int_0^s\|T_0(r)(f(t-r)-f(s-r))\|\dd r+\int_s^t\|T_0(r)f(t-r)\|\dd r\\
	& \leq \Co_1 |t-s|s\|f\|_\Lip+\Co_1|t-s|\|f\|_\infty\leq C|t-s|\|f\|_\Lip.
	\end{align*}
\hfill \end{proof}


Let $\vert_1$ and $\vert_2$ denote the projection onto the first and second coordinate in $E\times F$.

\begin{lemma}\label{lem:delta3}
	For $\tmax>0$ there is a $C\geq 0$ such that for every $(x,y)\in E\times\dom(B)$, $t,s\in [0,\tmax]$ we have
	\begin{align*}
	\bigl\|\big(\calT(t)-\calT(s)\big)\vect{x}{y}\vert_1\bigr\|& \leq C \, (\|x\|+\|y\|+\|By\|), \\
	\text{and} \qquad \bigl\|\big(\calT(t)-\calT(s)\big)\vect{x}{y}\vert_2\bigr\| &\leq C\,|t-s|\|By\|.
	\end{align*}
\end{lemma}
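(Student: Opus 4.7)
The strategy is to treat the two components separately using the explicit block form of the semigroup \eqref{eq:calT}, namely $\calT(t)\vect{x}{y} = \vect{T_0(t)x + Q(t)y}{S(t)y}$. The second (easy) estimate uses the analyticity/strong continuity of $S$ on $\dom(B)$, while the first is a pure uniform boundedness statement assembled from the representation \eqref{eq:Qtdef} of $Q(t)$ and the exponential bounds for $T_0$ and $S$ on $[0,\tmax]$.

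For the second component, I would compute $(\calT(t)-\calT(s))\vect{x}{y}\vert_2 = (S(t)-S(s))y$. Since $y\in\dom(B)$ and $B$ generates the $C_0$-semigroup $S$, the Taylor-like identity
\begin{equation*}
S(t)y - S(s)y = \int_s^t S(r) B y \dd r
\end{equation*}
is available, and using the uniform bound $\sup_{r\in[0,\tmax]}\|S(r)\|\leq M$ yields $\|(S(t)-S(s))y\|\leq M|t-s|\|By\|$, which is the second claim with $C:=M$.

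For the first component, $(\calT(t)-\calT(s))\vect{x}{y}\vert_1 = (T_0(t)-T_0(s))x + (Q(t)-Q(s))y$. The first summand is bounded by $2M\|x\|$ directly from the exponential bound on $T_0$. For the second, I would invoke the decomposition of \eqref{eq:Qtdef}, writing
\begin{equation*}
Q(r)y = Q_0(r)y + D_0 S(r)y - T_0(r) D_0 y,\qquad r\in\{s,t\},
\end{equation*}
which is legitimate for $y\in\dom(B)$. Each of the three pieces is then bounded uniformly in $r\in[0,\tmax]$: the convolution piece $\|Q_0(r)y\|\leq M^2\|D_0\|\tmax\|By\|$ by pulling the norm inside the integral and using the semigroup bounds on $T_0$ and $S$; the two remaining terms $\|D_0 S(r)y\|+\|T_0(r)D_0 y\|\leq 2M\|D_0\|\|y\|$ by boundedness of $D_0$ and of the semigroups. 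Applying the triangle inequality across $r=s$ and $r=t$ and combining with the $T_0$ estimate gives the claimed bound $C(\|x\|+\|y\|+\|By\|)$ with $C$ depending only on $\tmax$, $\|D_0\|$ and the uniform semigroup constants.

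There is no real obstacle here; the only conceptual point worth flagging is the asymmetry between the two estimates. The second-component bound is genuinely of Lipschitz type (linear in $|t-s|$) because $y\in\dom(B)$ provides the needed regularity. The first-component bound, by contrast, is only a uniform boundedness statement: the term $T_0(t)D_0 y - T_0(s)D_0 y$ cannot be made Lipschitz in $|t-s|$ because $D_0 y$ need not lie in $\dom(A_0)$ (indeed the range of $D_0$ meets $\dom(A_0)$ only in $\{0\}$), which is precisely the barrier responsible for the logarithmic loss noted after \eqref{eq:D}. This is why the lemma deliberately records a weaker, non-Lipschitz bound for the first coordinate.
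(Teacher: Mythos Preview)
Your proof is correct and follows essentially the same route as the paper: both argue the second component via $S(t)y-S(s)y=\int_s^t S(r)By\,\dd r$, and the first component via the decomposition $Q(r)y=Q_0(r)y+D_0S(r)y-T_0(r)D_0y$ together with uniform semigroup bounds on $[0,\tmax]$. Your additional remark explaining why the first-component estimate cannot be upgraded to a Lipschitz bound (since $D_0y\notin\dom(A_0)$) is a helpful observation not made explicit in the paper.
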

\begin{proof}
	We have
	\begin{align*}
	\big(\calT(t)-\calT(s)\big)\vect{x}{y}\vert_2=\int_s^{t}S(r)By\dd r
	\end{align*}
	and the second asserted inequality follows at once. 
	
	On the other hand, for the first component
	\begin{align*}
	&\big(\calT(t)-\calT(s)\big)\vect{x}{y}\vert_1=T_0(t)x-T_0(s)x+Q(t)y-Q(s)y\\
	&=T_0(t)x-T_0(s)x+D_0S(t)y-D_0S(s)y-T_0(t)D_0y+T_0(s)D_0y+Q_0(t)y-Q_0(s)y,
	\end{align*}
	and we obtain
	\begin{align*}
	\big\|\big(\calT(t)-\calT(s)\big)\vect{x}{y}\vert_1\big\|\le 2M\|x\| + 4M\|D_0\|\|y\|+(|t|+|s|) M^2\|D_0\|\|By\|,
	\end{align*}
	and the first inequality is also proved. 
\hfill \end{proof}

\begin{lemma}\label{lem:delta2}
	For $\tmax>0$ there is a $C\geq 0$ such that for every $(x,y)\in E\times\dom(B^2)$, $t,s\in [0,\tmax]$, $\tau>0$, $0 < j\tau\le\tmax$ we have
	\begin{align*}
	\bigl\|\calTT(\tau)^j\big(\calT(t)-\calT(s)\big)\vect{x}{y}\bigr\| \leq&  C\,|t-s|\|A_0T_0(j\tau)\|  (\|x\|+\|y\|+\|By\|)\\
	&+C \, |t-s| (\|By\|+\|B^2y\|).
	\end{align*}
\end{lemma}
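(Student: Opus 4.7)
The plan is to expand $\calTT(\tau)^j(\calT(t)-\calT(s))\vect{x}{y}$ using the upper-triangular formula \eqref{eq:powers of calTT} and to group the resulting pieces into those that carry the $\|A_0T_0(j\tau)\|$ factor and those that do not. Write $(\calT(t)-\calT(s))\vect{x}{y}=\vect{a}{b}$, where, as in the proof of Lemma \ref{lem:delta3},
\[
a=(T_0(t)-T_0(s))x+(Q_0(t)-Q_0(s))y+D_0(S(t)-S(s))y-(T_0(t)-T_0(s))D_0y,
\]
and $b=(S(t)-S(s))y=\int_s^t S(r)By\dd r$. Since $y\in\dom(B^2)$, we have $b\in\dom(B)$ with $\|b\|\leq M|t-s|\|By\|$ and $\|Bb\|\leq M|t-s|\|B^2 y\|$. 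The second component is immediate: $\|S(j\tau)b\|\leq M\|b\|\leq C|t-s|\|By\|$.

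For the first component, $\calTT(\tau)^j\vect{a}{b}\vert_1=T_0(j\tau)a+\bigl(-T_0(j\tau)D_0+D_0S(j\tau)+V_j(\tau)\bigr)b$, I would treat each term separately. The two summands $T_0(j\tau)(T_0(t)-T_0(s))x$ and $T_0(j\tau)(T_0(t)-T_0(s))D_0 y$ are handled by the algebraic identity
\[
T_0(j\tau)\bigl(T_0(t)-T_0(s)\bigr)=A_0T_0(j\tau)\bigl(A_0^{-1}T_0(t)-A_0^{-1}T_0(s)\bigr),
\]
which combined with Lemma \ref{lem:firsttriv} and the boundedness of $D_0$ yields bounds by $C|t-s|\|A_0T_0(j\tau)\|\,\|x\|$ and $C|t-s|\|A_0T_0(j\tau)\|\,\|y\|$, contributing to the first group. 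The piece $T_0(j\tau)(Q_0(t)-Q_0(s))y$ is estimated by applying Lemma \ref{lem:thirdtriv} to $f(r):=D_0 S(r)By$, observing $\|f\|_\infty\leq M\|D_0\|\|By\|$ and $\ell_\text{best}(f)\leq M\|D_0\|\|B^2 y\|$, so that after $T_0(j\tau)$ (of norm $\leq M$) one obtains a bound $C|t-s|(\|By\|+\|B^2 y\|)$, fitting the second group. Similarly, $T_0(j\tau)D_0(S(t)-S(s))y$ is controlled by $C|t-s|\|By\|$ from $\int_s^t S(r)By\dd r$.

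For the three pieces of $(-T_0(j\tau)D_0+D_0S(j\tau)+V_j(\tau))b$: the first two are bounded directly by $M\|D_0\|\cdot\|b\|\leq C|t-s|\|By\|$, while the third uses the same telescoping estimate as in the proof of Lemma \ref{lem:calTTpower}, giving $\|V_j(\tau)b\|\leq M\|Bb\|\leq C|t-s|\|B^2 y\|$. Summing all contributions and noting that $\|A_0T_0(j\tau)\|\geq \|T_0(j\tau)\|\cdot\|A_0T_0(j\tau)\|/\|T_0(j\tau)\|$ is uniformly bounded below in a trivial sense is not needed here—the first group absorbs every $\|A_0T_0(j\tau)\|$-factored estimate and the second group absorbs the rest—proves the inequality.

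The main obstacle is simply bookkeeping: one must recognise that every occurrence of a pure semigroup difference $T_0(t)-T_0(s)$ acting on an unregularized argument must be routed through the identity above so that an $A_0T_0(j\tau)$ factor appears outside, while the convolution difference $Q_0(t)-Q_0(s)$ and the differences of $S(t)-S(s)$ already carry factors of $B$ (and $B^2$ when Lipschitz constants are needed) that give regularity without invoking $A_0$. Getting this split right is what ensures the two-term structure of the stated bound.
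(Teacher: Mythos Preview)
Your argument is correct and follows the same overall strategy as the paper: expand via \eqref{eq:powers of calTT}, route the bare $T_0(t)-T_0(s)$ differences through the identity $T_0(j\tau)(T_0(t)-T_0(s))=A_0T_0(j\tau)\bigl(A_0^{-1}T_0(t)-A_0^{-1}T_0(s)\bigr)$ together with Lemma~\ref{lem:firsttriv}, and handle the $V_j(\tau)$ piece via the bound $\|V_j(\tau)z\|\leq C\|Bz\|$ from the proof of Lemma~\ref{lem:calTTpower}.

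The one genuine difference is your treatment of the convolution term. You keep $Q_0$ and apply Lemma~\ref{lem:thirdtriv} directly to $f(r)=D_0S(r)By$, which costs $\|By\|+\|B^2y\|$ and therefore lands in the \emph{second} group of the stated bound. The paper instead keeps the full $Q(t)-Q(s)$ inside its term $I_1$, factors out $A_0T_0(j\tau)$, and applies Lemma~\ref{lem:thirdtriv} to $A_0^{-1}Q=-(T_0*D_0S(\cdot))y$, whose Lipschitz norm requires only $\|y\|+\|By\|$; this places the convolution contribution in the \emph{first} group instead. Both routes yield the asserted inequality---the $\|B^2y\|$ factor is unavoidable in any case because of the $V_j(\tau)b$ term---but the paper's grouping is marginally sharper in that it does not spend the extra $B$-regularity on the convolution difference. (Your parenthetical remark about $\|A_0T_0(j\tau)\|$ being ``bounded below'' is indeed irrelevant and should simply be dropped.)
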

\begin{proof}
	From \eqref{eq:powers of calTT} we obtain
	\begin{align*}
	\calTT(\tau)^j\big(\calT(t)-\calT(s)\big)\vect{x}{y}\vert_2= &\ \int_s^{t}S(j\tau+r)By\dd r , \quad\qquad \text{and} \\
	\calTT(\tau)^j\big(\calT(t)-\calT(s)\big)\vect{x}{y}\vert_1
	= &\ T_0(j\tau )\bigl(T_0(t)x-T_0(s)x+Q(t)y-Q(s)y\bigr) \\
	 &\ - T_0(j\tau ) D_0\int_s^{t}S(r)By\dd r+D_0 S(j\tau ) \int_s^{t}S(r)By\dd r + V_j(\tau) \int_s^{t}S(r)By\dd r\\
	 = &\ I_1+I_2+I_3+I_4,
	\end{align*}
	where $I_1,\dots, I_4$ denote the four terms in the order of appearance. By Lemma \ref{lem:firsttriv}  
	\begin{align*}
	\|I_1\|&\leq \|A_0 T_0(j\tau )\| \Bigl(\|A_0^{-1}(T_0(t)-T_0(s))\|\|x\|+ \|A_0^{-1}(Q(t)-Q(s))y\|\Bigr)\\
	&\leq C \|A_0 T_0(j\tau )\| |t-s|\|x\|+\|A_0 T_0(j\tau )\|\|A_0^{-1}(Q(t)-Q(s))y\|,
	\end{align*}
	so we need to estimate $\|A_0^{-1}(Q(t)-Q(s))y\|$. 
	Since $A_0^{-1}Q$ has the appropriate convolution form, Lemma \ref{lem:thirdtriv} implies
	\begin{align*}
	&\|A_0^{-1}(Q(t)-Q(s))y\| =\Bigl\| (T_0*D_0S)(t)-(T_0*D_0S)(s)\Bigr\|\leq \Co_1|t-s|\|D_0\|\|By\|.
	\end{align*}
	Altogether we obtain 
	\begin{equation*}
	\|I_1\|\leq \Co_2|t-s| \|A_0 T_0(j\tau )\|(\|x\|+\|y\|+\|By\|).
	\end{equation*}
	For $I_2$ and $I_3$ we have
	\begin{equation*}
	\|I_2\|+\|I_3\|\leq \Co_3|t-s|\|By\|.
	\end{equation*}
	To estimate $I_4$ we recall from the proof of Lemma \ref{lem:calTTpower} that $\|V_j(\tau)z\|\leq \Co_4\|Bz\|$ (for $j\tau\leq [0,\tmax]$), so 
	\begin{equation*}
	\|I_4\|\leq \Co_4 \Bigl\|B \int_s^{t}S(r)By\dd r\Bigr \|\leq \Co_5|t-s| \|B^2y\|.
	\end{equation*}
	Finally, the estimates for $I_1,\dots,I_4$ together yield the assertion.
\hfill \end{proof}

\section{Proof of Theorem~\ref{theorem:Lie}}
\label{sec:proof}

The proof of our main result is based on a recursive expression for the global error, which involves the local error and some nonlinear error terms.
The recursive formula is obtained using a procedure which is sometimes called Lady Windermere's fan \cite[Section~II.3]{HairerWannerI}; our approach is inspired by \cite{OPiazzolaWalach}, \cite[Chapter~3]{diss_Walach}.
The local errors are weighted by $\calTT(\tau)^j$, therefore a careful accumulation estimate---heavily relying on the parabolic smoothing property---is required. In order to estimate the locally Lipschitz nonlinear terms we have to ensure that the numerical solution remains in the strip $\str$ (see Assumptions~\ref{ass:solution}). This will be shown using an induction process, which is outlined as follows:
\begin{itemize}
	\item We shall find $\tau_0 > 0$ and a constant $C>0$ such that for any $0 < \tau \leq \tau_0$ if $\vecu_0, \vecu_1,\dots,\vecu_{n-1}$ belong to the strip $\str$ and $t_{n}=n\tau\leq \tmax$, then
	\begin{equation*}
	\|\vecu(t_{n})-\vecu_{n}\|\leq C\tau |\log(\tau)| .
	\end{equation*}
	\item Since $C>0$ is a constant independent of $n$ and $\tau$, we can take $\tau_0>0$ sufficiently small such that for each $\tau \leq \tau_0$ we have $C\tau |\log(\tau)|\leq R$, the width of the strip $\str$, therefore by the previous step we have $\vecu_{n} \in \str$.
	\item Since $\vecu_0$ belongs to the strip and since $\tau_0$ and $C>0$ are independent of $n$, the proof can be concluded by induction.
\end{itemize}

Within the proof we will use the following conventions:
The positive constant $M$ comes from bounds for any of the analytic semigroups $T_0$, $S$, or $\calT$: For each $t \in (0,\tmax]$
\begin{equation}
\label{eq:smoothing}
\|T_0(t)\|,\|S(t)\|,\|\calT(t)\|\leq M, \qquad \text{and} \qquad \|t \, A_0 \, T_0(t)\|\leq M .
\end{equation}
Here the last estimate is usually referred to as the parabolic smoothing property of analytic semigroups, cf.~Remark~\ref{remark:analytic remarks} (c). 
By $C>0$ we will denote a constant that is independent of the time step, but may depend on other constants (e.g.~parameters of the problem) and on the exact solution (hence on the initial condition). Within a proof we shall indicate a possible increment of such appearing constants by a subscript: $\Co_1,\Co_2,\dots$, etc.

\begin{proof}[Proof of Theorem~\ref{theorem:Lie}]
	For the local Lipschitz continuity of the nonlinearity $\calF$, we will prove that the numerical solution remains in the strip $\Sigma$ around the exact solution $\vecu(t)$ using an induction argument.
	
	We estimate the global error $\vecu(t_n)-\vecu_n$, at time $t_n=n\tau\in(0,\tmax]$, by expressing it using the local error $\eloc_n = \vecu(t_n) - \bfL(\tau)(\vecu(t_{n-1}))$ as follows:
	\begin{align*}
	\vecu(t_n)-\vecu_n &= \vecu(t_n)-\bfL(\tau)\big(\vecu(t_{n-1})\big)+\bfL(\tau)\big(\vecu(t_{n-1})\big)-\bfL(\tau)\big(\vecu_{n-1}\big) \\
	&= \eloc_n+\calTT(\tau)\big(\vecu(t_{n-1})+\tau\calF(\vecu(t_{n-1}))\big)-\calTT(\tau)\big(\vecu_{n-1}+\tau\calF(\vecu_{n-1})\big) \\
	&= \eloc_n+\calTT(\tau)\big(\vecu(t_{n-1})-\vecu_{n-1}\big)+\tau\calTT(\tau)\varepsilon^{\calF}_{n-1} ,
	\end{align*}
	with the nonlinear difference term $\varepsilon^{\calF}_n = \calF(\vecu(t_n)) - \calF(\vecu_n)$. By resolving the recursion we obtain
	\begin{equation}
	\label{eq:global error representation}
	\begin{aligned}
	\vecu(t_n)-\vecu_n
	&= \eloc_n+\calTT(\tau)\big(\vecu(t_{n-1})-\vecu_{n-1}\big)+\tau\calTT(\tau)\varepsilon^{\calF}_{n-1}\\
	&= \eloc_n + \calTT(\tau)\eloc_{n-1} + \calTT(\tau)^2\big(\vecu(t_{n-2})-\vecu_{n-2}\big)  + \tau \calTT(\tau)^2 \varepsilon^{\calF}_{n-2} + \tau \calTT(\tau) \varepsilon^{\calF}_{n-1} \\
	&\hspace*{5.5pt}\vdots \\
	&= \eloc_n  + \sum_{j=1}^{n-1} \calTT(\tau)^j \eloc_{n-j} + \tau \sum_{j=1}^n \calTT(\tau)^j \varepsilon^{\calF}_{n-j} + \calTT(\tau)^n\big(\vecu(0)-\vecu_0\big).
	\end{aligned}
	\end{equation}
	Since we have $\vecu_0=\vecu(0)$, the last term vanishes.
	
	We now start the induction process. Let us assume that the error estimate \eqref{eq:Lie error estimate} holds for all $k \leq n - 1$ with $n \tau \leq \tmax$, i.e., for a $K>0$ independent of $\tau$ and $n$, we have
	\begin{equation}
	\label{eq:error estimates for the past}
	\text{for } k = 0, \dotsc, n - 1, \qquad \|\vecu(t_k)-\vecu_{k}\| \leq K \, \tau \, |\log(\tau)| .
	\end{equation}
	Below, we will show that the same error estimate also holds for $n$ as well.
	We note that, via $\vecu_0=\vecu(0)$, the assumed error estimate trivially holds for $n - 1 = 0$.
	

	\medskip
	We will now estimate the remaining terms of \eqref{eq:global error representation} in parts (i)--(iii), respectively. The estimates \eqref{eq:error estimates for the past} for the past values for $k$ only appear in part (iii).
	
	(i) We rewrite the local error $\eloc_n$ by using the forms \eqref{eq:VOC} and \eqref{eq:method1} of the exact and approximate solutions, respectively, and by Taylor's formula and (A\ref{item:5}) as 
	\begin{align}
	\nonumber \eloc_n&=\vecu(t_n)-\bfL(\tau)\big(\vecu(t_{n-1})\big) \\
	\nonumber &= \calT(\tau)\vecu(t_{n-1})+\int_0^\tau\hskip-1.0ex\calT(\tau-s)\calF(\vecu(t_{n-1}+s))\dd s - \calTT(\tau)\big(\vecu(t_{n-1})+\tau\calF(\vecu(t_{n-1}))\big) \\
	\nonumber &= \calT(\tau)\vecu(t_{n-1})+\int_0^\tau\calT(\tau-s)\calF(\vecu(t_{n-1}))\dd s \\
	\nonumber &\quad+ \int_0^\tau\calT(\tau-s)\int_0^s(\calF\circ\vecu)'(t_{n-1}+\xi)\dd\xi\dd s -\calTT(\tau)\big(\vecu(t_{n-1})+ \tau\calF(\vecu(t_{n-1}))\big) \\
	\nonumber &= \big(\calT(\tau)-\calTT(\tau)\big)\big(\vecu(t_{n-1})+\tau\calF(\vecu(t_{n-1}))\big) + \int_0^\tau\hskip-1.0ex\big(\calT(\tau-s)-\calT(\tau)\big) \calF(\vecu(t_{n-1}))\dd s \\
	\label{eq:three terms}&\quad+ \int_0^\tau\calT(\tau-s)\int_0^s(\calF\circ\vecu)'(t_{n-1}+\xi)\dd\xi\dd s.
	\end{align}
	In what follows we will estimate the three terms separately.
	
	\medskip
	We will bound the first term by using the boundedness of the semigroups  $T_0$ and $S$. Denote $(x,y)=\vecu(t_{n-1}) + \tau\calF(\vecu(t_{n-1}))$ and write 
	\begin{align*}
	& \big(\calT(\tau)-\calTT(\tau)\big)\vect xy\Bigr|_1 = \begin{pmatrix}0&Q_0(\tau)-V(\tau)\\0&0\end{pmatrix}\vect xy\Bigr|_1 \\
	=&\ Q_0(\tau)y-V(\tau)y = -\int_0^\tau T_0(\tau-\xi)D_0BS(\xi)y\dd\xi+\tau T_0(\tau)D_0BS(\tau)y.
	\end{align*}
	Whence we conclude
	\begin{equation*} 
	\big\|\bigl(\calT(\tau)-\calTT(\tau)\big)\vect xy\bigr\| \le\tau 2M^2\|D_0\|\|By\|\le \Co_1 \tau \|B(v(t_{n-1}) + \tau\calF_2(\vecu(t_{n-1})))\|.
	\end{equation*}
	The second term in \eqref{eq:three terms} can be estimated by Lemma \ref{lem:delta3}, and using  (A\ref{item:4}), as
	\begin{align*}
	\int_0^\tau \Big\| \big(\calT(\tau-s)-\calT(\tau)\big)\calF(\vecu(t_{n-1})) \Big\| \dd s \leq & \Co_2\tau\big(\|\calF(\vecu(t_{n-1}))\|+\|B\calF_2(\vecu(t_{n-1}) )\|\big).
	\end{align*}
	
	\noindent While, using the exponential boundedness of $\calT$ and (A\ref{item:5}), the third term in \eqref{eq:three terms} is directly bounded by
	\begin{align*}
	\int_0^\tau \int_0^s \Big\| \calT(\tau-s)(\calF\circ\vecu)'(t_{n-1}+\xi) \Big\| \dd\xi \dd s &\leq M\tau\| (\calF\circ\vecu)'\|_{\Ell^1([t_{n-1},t_n])}\\
	&\leq M\tau\| (\calF\circ\vecu)'\|_{\Ell^1([0,\tmax])}.
	\end{align*}
	%
	%
	Therefore, we finally obtain for the local error that
	\begin{equation}\label{eq:final estimate for (i)}
	\| \eloc_n \| \leq \Co_3 \tau.
	\end{equation}
	
	\medskip
	(ii) Since in each time step the local error is $\calO(\tau)$ and we have $\calO(1/\tau)$ time steps, a more careful analysis is needed for the the second term in \eqref{eq:global error representation}. We first rewrite this term by the variation of constants formula \eqref{eq:VOC} and the numerical method in the form \eqref{eq:method1}: 
	\begin{equation}
	\label{eq:red}
	\begin{aligned}
	&\ \sum_{j=1}^{n-1} \calTT(\tau)^j \eloc_{n-j} =\sum_{j=1}^{n-1} \calTT(\tau)^j \Big( \vecu(t_{n-j}) - \calTT(\tau) \big( \vecu(t_{n-j-1}) + \tau \calF(\vecu(t_{n-j-1})) \big) \Big) \\
	= &\ \sum_{j=1}^{n-1} \calTT(\tau)^j \big(\calT(\tau) - \calTT(\tau)\big) \vecu(t_{n-j-1}) \\
	&+ \sum_{j=1}^{n-1}\calTT(\tau)^j \, \Big( \! \int_0^\tau \! \calT(\tau-s) \calF(\vecu(t_{n-j-1}+s))\dd s - \tau \calTT(\tau) \calF(\vecu(t_{n-j-1})) \Big) . 
	\end{aligned}
	\end{equation}
	
	We rewrite the second term on the right-hand side of \eqref{eq:red} using Taylor's formula: 
	\begin{align*}
	& \calTT(\tau)^j \Big( \int_0^\tau \calT(\tau-s) \calF(\vecu(t_{n-j-1}+s))\dd s - \tau \calTT(\tau) \calF(\vecu(t_{n-j-1})) \Big) \\
	= &\ \calTT(\tau)^j\int_0^\tau \Big(\calT(\tau-s)\calF(\vecu(t_{n-j-1}+s))-\calTT(\tau)\calF(\vecu(t_{n-j-1}))\Big)\dd s \\
	= &\ \calTT(\tau)^j\Big(\int_0^\tau (\calT(\tau-s)-\calTT(\tau))\calF(\vecu(t_{n-j-1})) \dd s \\
	&\phantom{\calTT(\tau)^j\Big(} +\int_0^\tau\calT(\tau-s)\int_0^s(\calF\circ\vecu)'(t_{n-j-1}+\xi) \dd \xi  \dd s \Big) \\
	= &\ \int_0^\tau \hskip-1ex  \! \calTT(\tau)^j\big(\calT(\tau-s)-\calT(\tau)\big)\calF(\vecu(t_{n-j-1}))\dd s + \tau\calTT(\tau)^j\big(\calT(\tau)-\calTT(\tau)\big)\calF(\vecu(t_{n-j-1})) \\
	&\quad+ \int_0^\tau\int_0^s\calTT(\tau)^j\calT(\tau-s)(\calF\circ\vecu)'(t_{n-j-1}+\xi)\dd\xi\dd s.
	\end{align*}
	
	Combining the two identities above, for \eqref{eq:red} we obtain:
	\begin{equation}
	\label{eq:delta terms def}
	\begin{aligned}
	\sum_{j=1}^{n-1} \calTT(\tau)^j \eloc_{n-j} 
	&= \sum_{j=1}^{n-1} \big( \delta_{1,j} + \delta_{2,j} + \delta_{3,j} \big) \\
	\text{with} \qquad \delta_{1,j} &= \calTT(\tau)^j\big(\calT(\tau )- \calTT(\tau) \big) \Big(\vecu(t_{n-j-1}) + \tau\calF(\vecu(t_{n-j-1})) \Big) , \\
	\delta_{2,j} &= \int_0^\tau \calTT(\tau)^j\big(\calT(\tau-s)-\calT(\tau)\big)\calF(\vecu(t_{n-j-1})) \dd s , \\
	\delta_{3,j} &= \int_0^\tau \int_0^s \calTT(\tau)^j\calT(\tau-s)(\calF\circ\vecu)'(t_{n-j-1}+\xi) \dd\xi \dd s .
	\end{aligned}
	\end{equation}
	
	For the term $\delta_{1,j}$, upon setting $(x,y) = \vecu(t_{n-j-1}) + \tau\calF(\vecu(t_{n-j-1}))$ in Lemma \ref{lem:delta1}, by (A\ref{item:3}), (A\ref{item:4}), we obtain the following estimate for $j = 1,\dotsc,n-1$:
	\begin{align}
	\label{eq:delta1 final estimate}
	\|\delta_{1,j}\| \le \Co_4\tau^2 \|A_0 T_0(j\tau)\| \Big(& \big\| B \big( v(t_{n-j-1}) + \tau \calF_2(\vecu(t_{n-j-1})) \big) \big\| \\
	\nonumber	&+ \big\| B^2 \big( v(t_{n-j-1}) + \tau \calF_2(\vecu(t_{n-j-1})) \big) \big\| \Big) .
	\end{align}
	
	For the term $\delta_{2,j}$, setting $(x,y)=\calF(\vecu(t_{n-j-1}))$ in Lemma \ref{lem:delta2} by (A\ref{item:4}), we obtain the estimate for $j = 1,\dotsc,n-1$: 
	\begin{equation}
	\label{eq:delta2 final estimate}
	\begin{aligned}
	\|\delta_{2,j}\|&\leq \Co_5\tau^2 \|A_0 T_0(j\tau)\| \Big( \|\calF(\vecu(t_{n-j-1}))\|+\|B\calF_2(\vecu(t_{n-j-1}))\| \Big) \\
	&\qquad +\Co_6\tau^2 \Big( \|B\calF_2(\vecu(t_{n-j-1}))\|+\|B^2\calF_2(\vecu(t_{n-j-1}))\| \Big) .
	\end{aligned}
	\end{equation}
	
	The term $\delta_{3,j}$ is directly estimated by using Lemma~\ref{lem:calTTpower} and (A\ref{item:5}), for $j = 1,\dotsc,n-1$, as
	\begin{equation}
	\label{eq:delta3 final estimate}
	\begin{aligned}
	\|\delta_{3,j}\|& \leq \int_0^\tau \int_0^s  \Co_{7}(1+\log(j)) \Big\| \calT(\tau-s)(\calF\circ\vecu)'(t_{n-j-1}+\xi) \Big\| \dd \xi \dd s \\
	& \leq  M \Co_{7}(1+\log(j)) \int_0^\tau \int_0^s  \|(\calF\circ\vecu)'(t_{n-j-1}+\xi)\| \dd \xi \dd s \\
	& \leq \tau M \Co_{7}(1+\log(j)) \|(\calF\circ\vecu)'\|_{\Ell^1([t_{n-j-1},t_{n-j}])}. 
	\end{aligned}
	\end{equation}
	
	Finally, we combine the bounds \eqref{eq:delta1 final estimate}, \eqref{eq:delta2  final estimate}, \eqref{eq:delta3 final estimate}, respectively, for $\delta_{k,j}$, $k = 1,2,3$, then collecting the terms we obtain 
	\begin{equation}\label{eq:final estimate for (ii)}
	\begin{aligned}
	&\ \bigg\| \sum_{j=1}^{n-1} \calTT(\tau)^j \eloc_{n-j} \bigg\| \le \sum_{j=1}^{n-1} \Big( \|\delta_{1,j}\|+\|\delta_{2,j}\|+\|\delta_{3,j}\| \Big) \\
	\leq &\ \Co_8 \tau \sum_{j=1}^{n-1} \frac{1}{j} \, \Big( \|B v(t_{n-j-1})\| + \|B^2 v(t_{n-j-1})\| \Big)\\
	&\ +\Co_8 \tau  \sum_{j=1}^{n-1} \frac{1}{j} \, \Big(\|\calF(\vecu(t_{n-j-1}))\|  +\|B \calF_2(\vecu(t_{n-j-1})) \|\Big)\\
	&\ + \Co_9 \tau^2 \sum_{j=1}^{n-1} \Big( \|B\calF_2(\vecu(t_{n-j-1}))\|+\|B^2\calF_2(\vecu(t_{n-j-1}))\| \Big) \\
	&\ + \Co_{10}\tau\log(n)\|(\calF\circ u)'\|_{\Ell^1([0,\tmax])} \\
	\leq &\ \Co_{11} (1+\log(n))\tau + \Co_{12} \tau \, \leq \, \Co_{13} \tau\log(n+1)  ,
	\end{aligned}
	\end{equation}
	where we have used the parabolic smoothing property \eqref{eq:smoothing} of the analytic semigroup $T_0$ to estimate the factor by $\|A_0 T_0(j\tau)\| \leq M / (j\tau)$. 
	
	\medskip
	(iii) The errors in the nonlinear terms are estimated by using Lemma~\ref{lem:calTTpower} and the local Lipschitz continuity of $\calF$ in the appropriate spaces (see (A\ref{item:1}) and (A\ref{item:2})), in combination with the bounds \eqref{eq:error estimates for the past} for the past, as
	\begin{equation}\label{eq:final estimate for (iii)}
	\begin{aligned}
	&\ \bigg\| \tau \sum_{j=1}^n \calTT(\tau)^j \varepsilon^{\calF}_{n-j} \bigg\| 
	\leq\tau \sum_{j=1}^n \Big\| \calTT(\tau)^j \big( \calF(\vecu(t_{n-j})) - \calF(\vecu_{n-j}) \big) \Big\| \\
	\leq &\ \tau \sum_{j=1}^n M \big\| \calF(\vecu(t_{n-j})) - \calF(\vecu_{n-j}) \big\| 
	+ \tau \sum_{j=1}^n M \big\| B \big( \calF_2(\vecu(t_{n-j})) - \calF_2(\vecu_{n-j})\big) \big\| \\
	\leq &\ \tau \sum_{k=0}^{n-1} M   \LIP \| \vecu(t_{k}) - \vecu_{k} \| 
	+ \tau \sum_{k=0}^{n-1} M   \LIPB \| \vecu(t_{k}) - \vecu_{k} \|
	\leq \Co_{14}\tau \sum_{k=0}^{n-1} \| \vecu(t_{k}) - \vecu_{k} \| ,
	\end{aligned}
	\end{equation}
	recalling that $\LIP$ and $\LIPB$ are the Lipschitz constants on $\str$, see Assumptions~\ref{ass:solution} (A\ref{item:1}) and (A\ref{item:2}). 
	For the last inequality, we used here that $(\vecu_k)_{k=0}^{n-1}$ belongs to the strip $\str$ so that the Lipschitz continuity of $\calF$ can be used, see (A\ref{item:1}) and (A\ref{item:2}).
	
	The global error \eqref{eq:global error representation} is bounded by the combination of the estimates \eqref{eq:final estimate for (i)}, \eqref{eq:final estimate for (ii)}, and \eqref{eq:final estimate for (iii)} from (i)--(iii), which altogether yield
	\begin{equation}
	\label{eq:final estimate - pre Gronwall}
	\begin{aligned}
	\|\vecu(t_n) - \vecu_n\| 
	\leq &\ 
	\Co_3 \tau + \Co_{13}\log(n+1)\tau +\Co_{14} \tau \sum_{k=0}^{n-1} \| \vecu(t_{k}) - \vecu_{k} \| \\
	\leq &\ \Co_{15}\log(n+1)\tau+\Co_{14} \tau \sum_{k=0}^{n-1} \| \vecu(t_{k}) - \vecu_{k} \| .
	\end{aligned}
	\end{equation}
	A discrete Gronwall inequality then implies 
	\begin{equation}
	\label{eq:final estimate}
	\|\vecu(t_n) - \vecu_n\| \leq 
	\Co_{15} \ee^{\Co_{14} \tmax } \log(n+1) \tau \leq  C|\log(\tau)| \tau ,
	\end{equation}
	for $t_n=\tau n\in[0,\tmax]$, with the constant $C:=2\Co_{15}\ee^{\Co_{14}\tmax}>0$. Then for a $\tau_0>0$ sufficiently small such that for each $\tau \leq \tau_0$ we have $C|\log(\tau)| \tau\leq R$, then $\vecu_{n}\in \str$ and the error estimate \eqref{eq:Lie error estimate} is satisfied for $n$ as well. Hence \eqref{eq:error estimates for the past} holds even up to $n$ instead of $n-1$. Therefore, by induction, the proof of the theorem is complete.
\hfill \end{proof}

\begin{remark}\label{remark:finalafterproof}
	\begin{abc}
		\item Theorem~\ref{theorem:Lie} remains true, with an almost verbatim proof as above, if $B$ is merely assumed to be the generator of a $C_0$-semigroup.
		This requires the following additional condition:
		\vskip1ex
		
		\begin{numer}
			\item[(A5$'$)] The function $B\circ \calF_2\circ \vecu$ is differentiable and $(B\circ \calF_2\circ \vecu)'\in \Ell^1([0,\tmax];F)$. %
		\end{numer}
		\vskip1ex
		This is relevant only for the term $\delta_{3,j}$ in the inequality \eqref{eq:delta3 final estimate} when one applies the stability estimate from Lemma \ref{lem:calTTpower}.
		\item Time-dependent nonlinearities can also be allowed and the same error bound holds without essential modification of the previous proof. 
		Of course, the conditions (A\ref{item:1}), (A\ref{item:2}), (A\ref{item:4}), and (A\ref{item:5}) in Assumption~\ref{ass:solution}, involving $\calF$ and $\calF_2$ need to be suitably modified. For example the functions $\calF(t,\cdot)$ need  to be uniformly Lipschitz for $t\in [0,\tmax]$ (and even this can be relaxed a little), and the function $f$ defined by $f(t):=\calF(t,\vecu(t))$ needs to be differentiable, etc.
		
		\item The assumptions (A\ref{item:3}) and (A\ref{item:4}) involving the domain $\dom(B^2)$ may seem a little restrictive. However, in some applications these conditions are naturally satisfied: For example if $F$ is finite dimensional (such is the case for finite networks, see \cite{MugnoloRomanelli} or \cite{Sikolya_flows}). At the same time, these conditions seem to be optimal in this generality, and play a role only in the local error estimate of the Lie splitting, i.e.,  in Lemma \ref{lem:loclieA} and its applications. Indeed, at other places the conditions involving $\dom(B^2)$ are not needed.
		
		\item We would also like to point out that the assumptions (A\ref{item:3})--(A\ref{item:5}) are comparable to the assumptions required for \cite[Theorem~A.1]{dynbc_PDAE_splitting}, see, for instance, the second time derivatives in the last estimate of the proof therein. We also note that \cite[Appendix~A]{dynbc_PDAE_splitting} uses global Lipschitz assumptions.

	\end{abc}
\end{remark}

\section{Numerical experiments}
\label{section:numerics}

%
%

To illustrate and complement the error estimates of Theorem~\ref{theorem:Lie}, we have performed numerical experiments for Example~\ref{examp:Lip}.
\blueon In order to allow for a direct comparison with other works from the literature, we performed the same numerical experiments which were performed in \cite{dynbc} and \cite{dynbc_PDAE_splitting}. \blueoff 

Let $\Omega$ be the unit disk with boundary $\Ga = \{ x =(x_1,x_2)\in \R^2 \st \|x\|_2 = 1 \}$, with $\ga$ denoting the trace operator, and $\nnu$ denoting the outward unit normal field. Let us consider the boundary coupled  semilinear parabolic partial differential equation (PDE) system $u:\overline{\Om} \times [0,t_{\max}] \to \R$ and $v: [0,t_{\max}] \times  \Ga \to \R$ satisfying
\begin{equation}
\label{eq:PDE for numerical experiments}
\begin{cases}
\begin{alignedat}{3}
\pa_t u = &\ \Delta u + \calF_1(u,v) + \varrho_1 & \qquad & \text{ in } \Om , \\
\pa_t v = &\ \Delta_\Ga v + \calF_2(u,v) + \varrho_2 & \qquad & \text{ on } \Ga , \\
\ga u = &\ v & \qquad & \text{ on } \Ga , 
\end{alignedat}
\end{cases}	
\end{equation}
where the two nonlinearities are specified later, and where the two inhomogeneities $\varrho_1$ and $\varrho_2$ are chosen such that the exact solutions are known to be $u(x,t) = \exp(-t) x_1^2 x_2^2$ and $v(x,t) = \exp(-t) x_1^2 x_2^2$ (which naturally satisfy $\ga u = v$). The boundary coupled PDE system \eqref{eq:PDE for numerical experiments} fits into the abstract framework \eqref{eq:main0} in the sense of Example~\ref{examp:Lip}. We note that Theorem~\ref{theorem:Lie} still holds for \eqref{eq:PDE for numerical experiments} with the time-dependent inhomogeneities, see Remark~\ref{remark:finalafterproof}~(b). 

We performed numerical experiments using the splitting method \eqref{eq:method1}, written componentwise \eqref{eq:methodcomp min D_0}, which is applied to the bulk--surface finite element semi-discretisation, see \cite{ElliottRanner,dynbc}, of the weak form of \eqref{eq:PDE for numerical experiments}. 
The bulk--surface finite element semi-discretisation is based on a quasi-uniform triangulation $\Om_h$ of the continuous domain $\Om$, such that the discrete boundary $\Ga_h = \pa \Om_h$ is also a sufficient good approximation of $\Ga$. By this construction the traces of the finite element basis functions in $\Om_h$ naturally form a basis on the boundary $\Ga_h$, i.e.~$\{\ga_h \phi_j\}$ forms a boundary element basis on $\Ga_h$. 
Since $\Om \neq \Om_h$, the numerical and exact solutions are compared using the lift $^\ell$, given by $\eta_h^\ell(x^\ell) = \eta_h(x)$, where $x^\ell \in \Om$ is a suitable image of $x \in \Om_h$, see \cite[Section~4.1.1]{ElliottRanner}. 
For more details we refer to \cite[Section~4 and 5]{ElliottRanner}, or \cite[Section~3]{dynbc}.
Altogether this yields the matrix--vector formulation of the semi-discrete problem, for the nodal vectors $\bfu(t) \in \R^{N_\Om}$ and $\bfv(t) \in \R^{N_\Ga}$,
\begin{equation}
\label{eq:matrx-vector formulation}
\begin{cases}
\begin{aligned}
\bfM_\Om \dot \bfu + \bfA_\Om \bfu = &\ \bfF_1(\bfu,\bfv) + \bfrho_1 , \\
\bfM_\Ga \dot \bfv + \bfA_\Ga \bfv = &\ \bfF_2(\bfu,\bfv) + \bfrho_2 , \\
\bm{\ga} \bfu = &\ \bfv ,
\end{aligned}
\end{cases}	
\end{equation}
where $\bfM_\Om$ and $\bfA_\Om$ are the mass-lumped mass matrix and stiffness matrix for $\Om_h$, and similarly $\bfM_\Ga$ and $\bfA_\Ga$ for the discrete boundary $\Ga_h$, while the nonlinearities $\bfF_i$ and the inhomogeneities $\bfrho_i$ are defined accordingly. The discrete trace operator $\bm{\ga} \in \R^{N_\Ga \times N_\Om}$ extracts the nodal values at the boundary nodes. For all these quantities we have used quadratures of sufficiently high order such that the quadrature errors are negligible compared to all other spatial errors. For mass lumping in this context, and for its spatial approximation properties, we refer to \cite[Section~3.6]{dynbc}.

The two semigroups in 
\eqref{eq:methodcomp min D_0} are known, and are computed using the \verb|expmv| Matlab package of Al-Mohy and Higham \cite{expmv}, in the above matrix--vector formulation \eqref{eq:matrx-vector formulation} the (diagonal) mass matrices are transformed to the identity, i.e.~$\widetilde\bfA_\Om = \bfM_\Om^{-1} \bfA_\Om$, and similarly for $\widetilde\bfA_\Ga$, and all other terms. The numerical experiments were performed for this transformed system. In this setting the operator $D_0$ in \eqref{eq:D} corresponds to the harmonic extension operator, which we compute here by solving a Poisson problem with inhomogeneous Dirichlet boundary conditions.

\subsection{Convergence experiments}
We present two convergence experiments for the above boundary coupled PDE system, with nonlinearities:
\begin{subequations}
	\label{eq:nonlinearities}
	\begin{alignat}{3}
	\label{eq:nonlinearities - A-C}
	\calF_1(u,v) =&\  0 \qquad \text{and} & \qquad \calF_2(u,v) = &\ -v^3 + v, \\
	\label{eq:nonlinearities - mixing}
	\calF_1(u,v) =&\  u^2 \qquad \text{and} & \qquad \calF_2(u,v) = &\ v \ga u, 
	\end{alignat}
\end{subequations}
i.e.~with an Allen--Cahn-type nonlinearity, and a mixing nonlinearity.

This first nonlinearity was chosen since both \cite{dynbc} and \cite{dynbc_PDAE_splitting} have reported on numerical experiments for the same Allen--Cahn example $f_\Om(u) = 0$ and $f_\Ga(u) = -u^3 + u$, hence the  numerical experiments here are directly comparable to those referenced above. 
The second nonlinearity was chosen since it introduces some mixing between the bulk and boundary variables $u$ and $v$. 
We note here that the our experiments only serve as an illustration to Theorem~\ref{theorem:Lie}---and as comparison to the literature---since the nonlinearities in \eqref{eq:nonlinearities} do not satisfy (A\ref{item:1})--(A\ref{item:5}).

Using the splitting integrator \eqref{eq:method1}, in the form \eqref{eq:methodcomp min D_0}, we have solved the transformed system \eqref{eq:matrx-vector formulation} for a sequence of time steps $\tau_k = \tau_{k-1} / 2$ (with $\tau_0 = 0.2$) and a sequence of meshes with mesh width $h_k \approx h_{k-1} / \sqrt{2}$. 

In Figure~\ref{fig:convplot_Lie} and \ref{fig:convplot_Lie - A-C} we report on the $\Ell^\infty(\Ell^2(\Om))$ and $\Ell^\infty(\Ell^2(\Ga))$ errors of the two components for the two nonlinearities in \eqref{eq:nonlinearities}, comparing the (nodal interpolation of the) exact solutions and the numerical solutions. Here, analogously to the norm on $\Ell^\infty([0,T];X)$, for a sequence $(v_k)_{k=1}^N \subset X$ we set $\|v_k\|_{\Ell^\infty(X)} = \max_{k=1,\dotsc,N} \|v_k\|_X$.) In the log-log plots we can observe that the temporal convergence order matches the predicted convergence rate $\calO(\tau |\log(\tau)|)$ of Theorem~\ref{theorem:Lie}, note the dashed reference line  $\calO(\tau)$ (the factor $ |\log(\tau)|$ is naturally not observable).
In the figures each line (with different marker and colour) corresponds to a fixed mesh width $h$, while each marker on the lines corresponds to a time step size $\tau_k$.
The precise time steps and degrees of freedom values are reported in Figure~\ref{fig:convplot_Lie} and \ref{fig:convplot_Lie - A-C}.

We are reporting on the fully discrete errors of the numerical solution on multiple spatial grids, to be able to observe clearly that the convergence order is not reduced in contrast to \cite{dynbc}, cf.~\cite[Figure~2]{dynbc} where the error curves are sliding upward with each spatial refinement. Evidently, this is not the case here.


\begin{figure}[htbp]
	\includegraphics[width=\textwidth,clip,trim={70 5 70 5}] {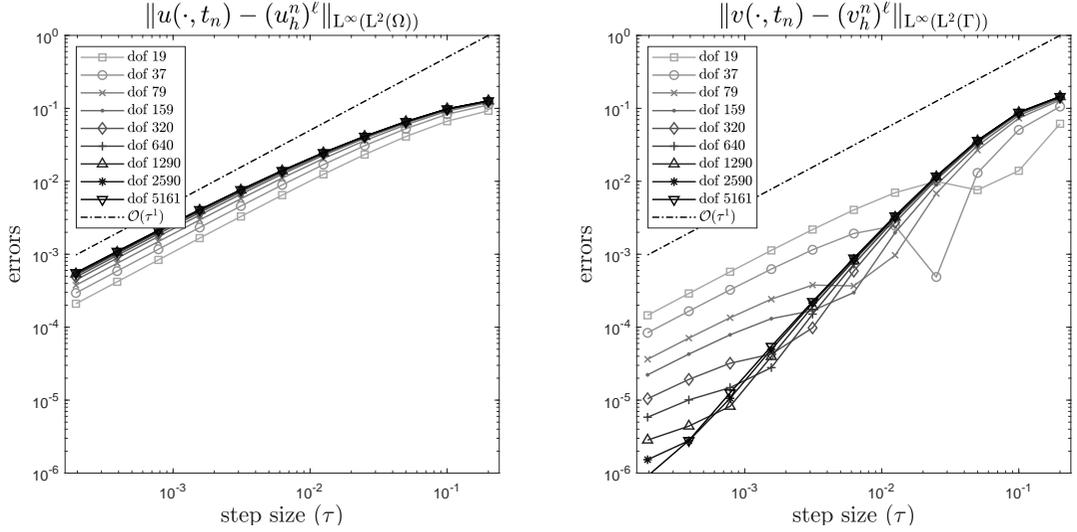}

	\caption{Temporal convergence plot for the splitting scheme 
		\eqref{eq:methodcomp min D_0} applied to the boundary coupled PDE system \eqref{eq:PDE for numerical experiments} with \eqref{eq:nonlinearities - mixing}, $\Ell^\infty(\Ell^2)$-norms of $u$ and $v$ components on the left- and right-hand sides, respectively.}
	\label{fig:convplot_Lie}
\end{figure}

\begin{figure}[htbp]
	\includegraphics[width=\textwidth]{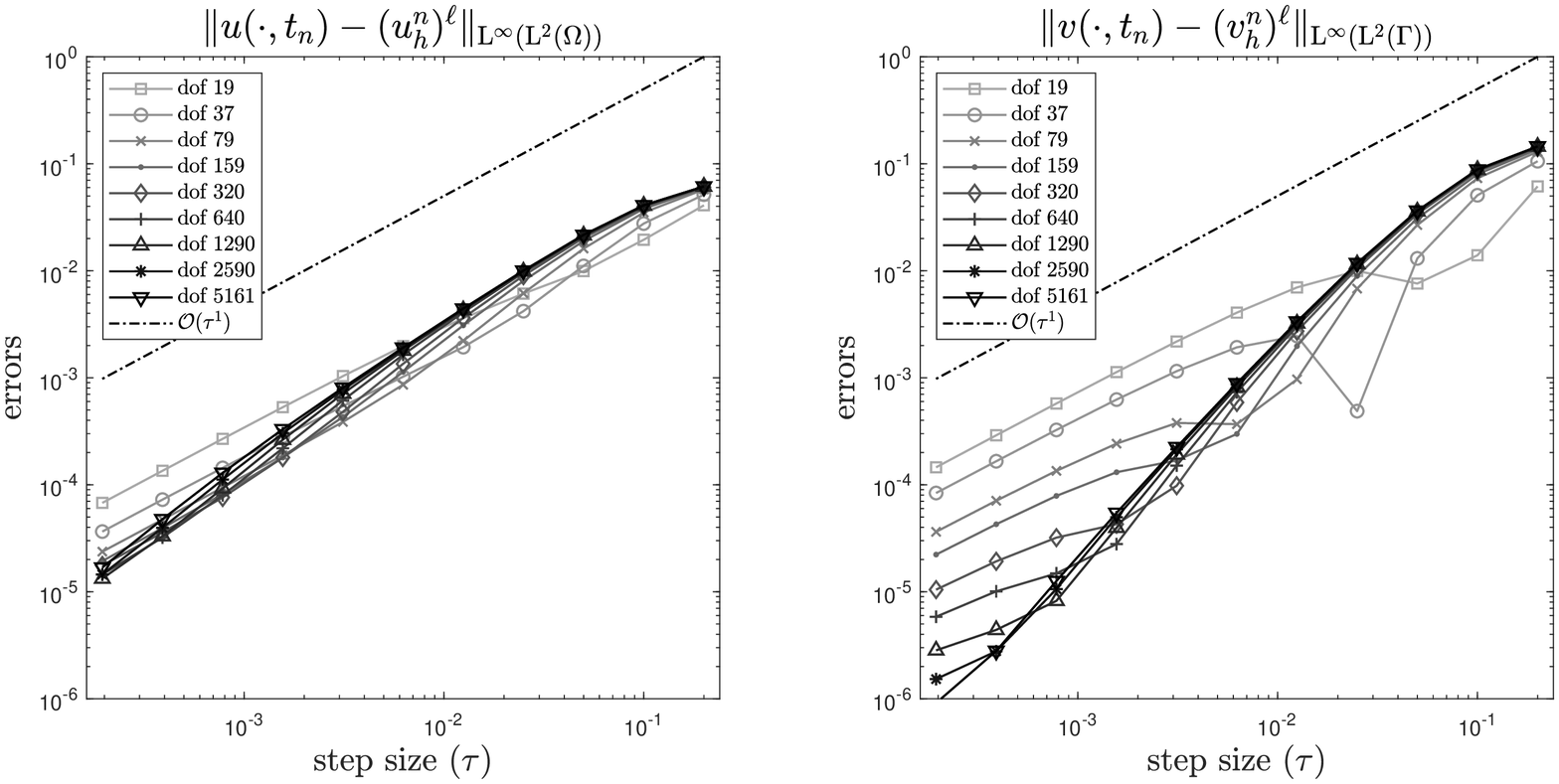}
	
	\caption{Temporal convergence plot for the splitting scheme 
			\eqref{eq:methodcomp min D_0} applied to the boundary coupled PDE system \eqref{eq:PDE for numerical experiments} with Allen--Cahn-type nonlinearity \eqref{eq:nonlinearities - A-C}, $\Ell^\infty(\Ell^2)$-norms of $u$ and $v$ components on the left- and right-hand sides, respectively.}
	\label{fig:convplot_Lie - A-C}
\end{figure}

\subsection{Convergence experiments with dynamic boundary conditions}
\label{section:numerics dynbc}

We performed the same convergence experiment for a partial differential equation with \emph{dynamic boundary conditions}, cf.~\cite{dynbc,dynbc_PDAE_splitting}, let $u:\overline{\Om} \times [0,\tmax] \to \R$ solve the problem
\begin{equation}
\label{eq:PDE dynbc for numerical experiments - classical form}
\begin{cases}
\begin{alignedat}{3}
\pa_t u = &\ \Delta u + f_\Om(u) + \varrho_1 & \qquad & \text{ in } \Om , \\
\pa_t u = &\ \Delta_\Ga u -\pa_\nnu u + f_\Ga(u) + \varrho_2 & \qquad & \text{ on } \Ga , 
\end{alignedat}
\end{cases}	
\end{equation}
using the same domain, exact solution, nonlinearities, etc.~as above.

Problem~\eqref{eq:PDE dynbc for numerical experiments - classical form} is equivalently rewritten as a boundary coupled PDE system \eqref{eq:PDE for numerical experiments},
where the two nonlinearities are given by
\begin{align*}
\calF_1(u,v) = f_\Om(u) \quad \text{and} \quad \calF_2(u,v) = -\pa_\nnu u + f_\Ga(u) .
\end{align*}
That is, the the nonlinear term $\calF_2$ incorporates the coupling through the Neumann trace $-\pa_\nnu u$. 
The numerical method \eqref{eq:method1}, written componentwise \eqref{eq:methodcomp min D_0}, is applied to this formulation with the nonlinearity $\calF_2$ containing the Neumann trace operator.

We again report on convergence test with the nonlinearities \eqref{eq:nonlinearities} in the roles of $f_\Om$ and $f_\Ga$.

In Figure~\ref{fig:convplot_Lie - dynbc} and \ref{fig:convplot_Lie - dynbc - A-C} we report on the $\Ell^\infty(\Ell^2(\Om))$ and $\Ell^\infty(\Ell^2(\Ga))$ error of the bulk and surface errors for the two nonlinearities in \eqref{eq:nonlinearities}, comparing the (nodal interpolation of the) exact solutions and the numerical solutions. (Figure~\ref{fig:convplot_Lie - dynbc} is obtained exactly as it was described for Figure~\ref{fig:convplot_Lie}, the precise time steps and degrees of freedom values can be read off from the figure.) 
Although in this case, due to the unboundedness of the Neumann trace operator in $\calF_2(u,v) = -\pa_\nnu u + f_\Ga(u)$, the conditions of Theorem~\ref{theorem:Lie} are not satisfied, in Figure~\ref{fig:convplot_Lie - dynbc} we still observe a convergence rate $\calO(\tau)$ (note the reference lines). 
Qualitatively we obtain the same plots for $\Ell^\infty(\He^1(\Om))$ and $\Ell^\infty(\He^1(\Ga))$ norms.

Note that our splitting method does not suffer from any type of order reduction, in contrast to the splitting schemes proposed in \cite{dynbc}, see Figure~1 and 2 therein. In \cite{dynbc_PDAE_splitting} the same order reduction issue was overcome by a different approach, using a correction term. 

\begin{figure}[htbp]
	\includegraphics[width=\textwidth,clip,trim={70 5 70 5}] {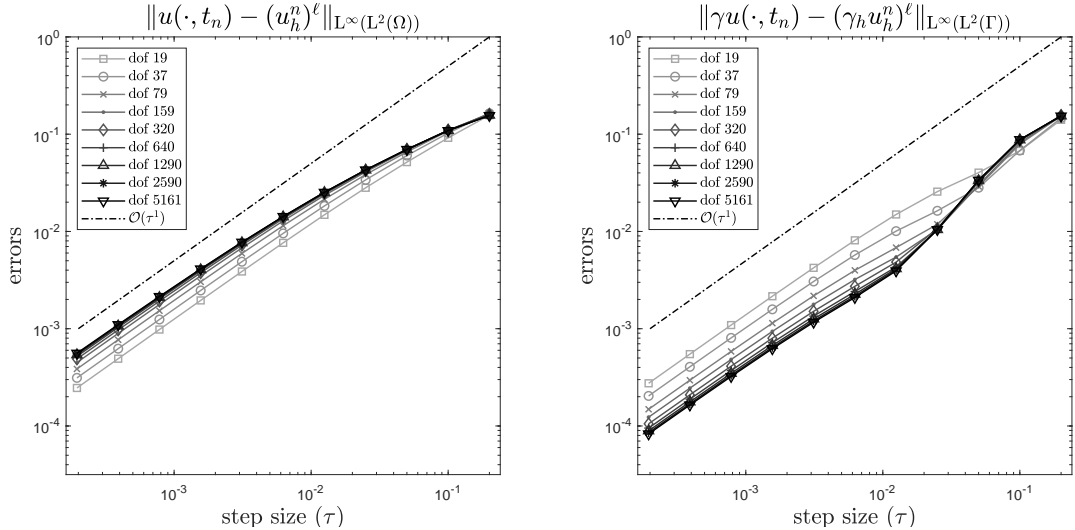}
	
	\caption{Temporal convergence plot for the splitting scheme 
		applied to the PDE with dynamic boundary conditions \eqref{eq:PDE dynbc for numerical experiments - classical form} with \eqref{eq:nonlinearities - mixing}, $\Ell^\infty(\Ell^2)$-norms of $u$ and $\ga u$ components on the left- and right-hand sides, respectively.}
	\label{fig:convplot_Lie - dynbc}
\end{figure}

\begin{figure}[htbp]
	\includegraphics[width=\textwidth] {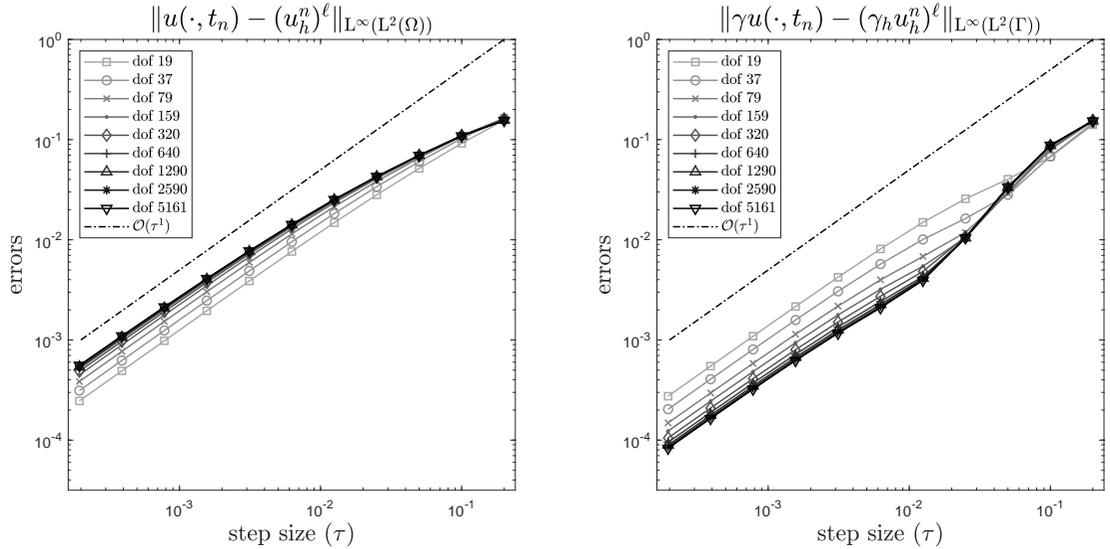}
	
	\caption{Temporal convergence plot for the splitting scheme 
			applied to the PDE with Allen--Cahn-type dynamic boundary condition \eqref{eq:PDE dynbc for numerical experiments - classical form} with \eqref{eq:nonlinearities - A-C}, $\Ell^\infty(\Ell^2)$-norms of $u$ and $\ga u$ components on the left- and right-hand sides, respectively.}
	\label{fig:convplot_Lie - dynbc - A-C}
\end{figure}

\section*{Acknowledgments}
The estimates within this paper were finalised during a Research in Pairs programme of all three authors at the Mathematisches Forschungsinstitut Oberwolfach (MFO). We are grateful for the Institute's support---and for its unparalleled inspiring atmosphere. 

\section*{Funding}
Petra Csom\'os acknowledges the Bolyai János Research Scholarship of the Hungarian Academy of Sciences. 

The work of Bal\'azs Kov\'acs is funded by the Heisenberg Programme of the Deut\-sche For\-schungs\-ge\-mein\-schaft (DFG, German Research Foundation) -- Project-ID 446\-431602, and by the DFG Graduiertenkolleg 2339 \emph{IntComSin} -- Project-ID 321821685.

The research was partially supported by the bilateral German-Hungarian Project \textit{CSITI -- Coupled Systems and Innovative Time Integrators}
financed by DAAD and Tempus Public Foundation.  This article is based upon work from COST Action CA18232 MAT-DYN-NET, supported by COST (European Cooperation in Science and Technology).


\bibliographystyle{abbrvnat}
\bibliography{ref-coupled-splitting-total}

\end{document}